\newtheorem{lemma}{Lemma}[section]
\newtheorem{prop}{Proposition}[section]
\newtheorem{exmp}{Example}[section]
\newtheorem{thm}{Theorem}[section]
\newtheorem{cor}{Corollary}[section]
\newtheorem{dfn}{Definition}[section]
\newtheorem{rmk}{Remark}[section]
\newcommand{\shP}{\mathcal{P}}
\newcommand{\shL}{\mathcal{L}}
\newcommand{\CC}{\mathbb{C}}
\newcommand{\R}{{\mathbb R}}
\newcommand{\be}{\begin{equation}}
\newcommand{\ee}{\end{equation}}
\newcommand{\ba}{\begin{eqnarray}}
\newcommand{\ea}{\end{eqnarray}}
\begin{document}

\title[Unitarity in quantization for toric manifolds]{A new look at unitarity in quantization commutes with reduction for toric manifolds}

{}

\author[Mour\~ao]{José M. Mour\~ao}
\email{jmourao@tecnico.ulisboa.pt}
\address{Department of Mathematics and Center for Mathematical Analysis, Geometry and Dynamical Systems, Instituto Superior Técnico, Universidade de Lisboa, 1049-001 Lisboa, Portugal} 

\author[Nunes]{João P. Nunes}
\email{jpnunes@math.tecnico.ulisboa.pt}
\address{Department of Mathematics and Center for Mathematical Analysis, Geometry and Dynamical Systems, Instituto Superior Técnico, Universidade de Lisboa, 1049-001 Lisboa, Portugal} 

\author[Pereira]{Augusto Pereira}
\email{augusto.pereira@iseg.ulisboa.pt}
\address{Department of Mathematics, Instituto Superior de Economia e Gest\~ao, 
Universidade de Lisboa, 1200-781 Lisboa, Portugal}

\author[Wang]{Dan Wang}
\email{dan.wang@tecnico.ulisboa.pt;dwang@mpim-bonn.mpg.de}
\address{Department of Mathematics and Center for Mathematical Analysis, Geometry and Dynamical Systems, Instituto Superior Técnico, Universidade de Lisboa, 1049-001 Lisboa, Portugal;
Max Planck Institute for Mathematics, Vivatsgasse 7, 53111 Bonn, Germany} 
\maketitle
\begin{abstract}
For a symplectic toric manifold we consider half-form quantization in mixed polarizations $\mathcal{P}_\infty$, associated to the action of a subtorus $T^p\subset T^n$. The real directions in these polarizations are generated by components of the $T^p$ moment map.

Polarizations of this type can be  obtained 
by starting at a toric K\"ahler polarization
$\mathcal{P}_0$
and then following 
 Mabuchi rays of toric K\"ahler polarizations generated by the norm square of the moment map of the torus subgroup. 
These geodesic rays are lifted to the quantum bundle via a generalized coherent state transform (gCST) and define equivariant isomorphisms between Hilbert spaces for the  K\"ahler polarizations and the Hilbert space for the  mixed polarization. 

The polarizations $\mathcal{P}_\infty$ give a new way of looking at 
the problem of unitarity in
the quantization commutes with reduction with respect to the $T^p$-action, as follows.
The prequantum operators for the components of the moment map of the $T^p$-action act diagonally with discrete spectrum corresponding to the integral points of the moment polytope. 
The Hilbert space for the quantization with respect to $\mathcal{P}_\infty$ then naturally decomposes as a direct sum of the Hilbert spaces for all its quantizable coisotropic reductions which, in fact, are the K\"ahler reductions of the initial K\"ahler polarization $\mathcal{P}_0$. 
This will be shown to imply that, for the polarization 
$\mathcal{P}_\infty$, quantization commutes unitarily with 
reduction. The problem of unitarity in quantization commutes with reduction  for $\mathcal{P}_0$ is then equivalent to the question of whether quantization in
the polarization $\mathcal{P}_0$  is unitarily equivalent 
with quantization in the polarization $\mathcal{P}_\infty$. 
In fact,  this does not hold in general in the toric case.

\end{abstract}

{}
\tableofcontents
\section{Introduction and main results}
Recently, several interesting relations between Hamiltonian flows in imaginary time and geometric quantization on a K\"ahler manifold, $M$, have been explored (see e.g. \cite{KMN1,KMN4,LW1,LW3,P,W1}). Namely, these ``flows" describe geodesics for the Mabuchi metric on the space of K\"ahler metrics on $M$ which often converge, at infinite geodesic time, to interesting real or mixed polarizations. 
Besides leading to interesting polarizations the Mabuchi geodesics
have frequently the 
very important property of allowing 
natural lifts to the quantum bundle
via generalized coherent state transforms. These transforms can then be used to compare quantizations for different polarizations.

A particularly nice class of K\"ahler manifolds to study with these methods is the class of toric K\"ahler manifolds $(M, \omega,J)$, with Hamiltonian $T^{n}$ action and the corresponding moment map $\mu:M \twoheadrightarrow P$. Here, the Mabuchi rays generated by a convex function of the moment map and starting at any initial toric K\"ahler polarization give, at infinite geodesic time, the toric real polarization.

By lifting the geodesics to the quantum bundle we get coherent state transforms
which  relate the spaces of holomorphic sections along the geodesic family and show the convergence of the monomial sections to the distributional sections of the prequantum line bundle which generate the Hilbert space of quantum states for the real toric polarization. 

This convergence has been established for $L^1$ normalized sections in \cite{BFMN} and in \cite{KMN1} it was shown for $L^2$ normalized sections if one includes the half-form correction. In \cite{KMN4}, the convergence of half-form corrected holomorphic sections is described in terms of a generalized coherent state transform. 
In \cite{LW1,LW2,LW3}, the study of Mabuchi geodesic families was generalized from the toric case to the case of manifolds with an Hamiltonian torus action. There are many previous works on closely related problems (see e.g. \cite{An2,BFHMN,BHKMN,CLL,FMMN1,GS1,GS3,Hal,HK2,JW,LY1,MNP,MNR}).

In the present paper, we include the half-form correction for toric mixed polarizations 
$\mathcal{P}_{\infty}$, attained at infinite geodesic time along Mabuchi rays which, starting at an initial toric K\"ahler structure defined by a symplectic potential $g$, are obtained by Hamiltonian flow in imaginary time generated by functions $H$ on the moment polytope that are convex only along $\mathrm{Im}\mu_p$, where $\mu_{p}: M\twoheadrightarrow \Delta$ denotes the moment map of a subtorus $T^{p}$ action. We first provide a local description of $\mathcal{P}_{\infty}$. Secondly, we identify a basis for quantum space $\mathcal{H}_{\mathcal{P}_\infty}$ associated to half-form corrected $\mathcal{P}_{\infty}$ that is labelled by the integer points of the moment polytope $P$. Thirdly, using the generalized coherent state transform (gCST), we construct an isomorphism between the quantum spaces for the half-form corrected K\"ahler polarization and for the mixed polarization. 

As described in Section \ref{newsubsec}, the quantization commutes with reduction correspondence is a very natural property of the quantization in the mixed polarization $\mathcal{P}_\infty$, since its coisotropic reductions occur at fixed values of the components of $\mu_p$ and since these global functions, being $\mathcal{P}_\infty$-polarized, act simply by multiplication operators on 
$\mathcal{H}_{\mathcal{P}_\infty}.$ The general properties of $\mathcal{P}_\infty$, namely having real directions corresponding to the components of $\mu_p$ and complex directions corresponding to the K\"ahler reductions of 
$\mathcal{P}_0$, also indicate as a natural consequence that the quantization commutes with reduction correspondence, for each level set of $\mu_p$, should be unitary up to constant.
As we will show, in our case, in fact, unitarity is obtained globally, for all (quantizable) levels of $\mu_p$ at once. 

In fact, as described in Section \ref{subsecqcr}, the gCST provides a natural definition of hermitian structure on $\mathcal{H}_{\mathcal{P}_\infty}$, so that there is a unitary isomorphism between $\mathcal{H}_{\mathcal{P}_\infty}$ and the direct sum of the Hilbert spaces of the (quantizable) K\"ahler reductions for the initial toric K\"ahler structure, defined by the symplectic potential $g$, which, in fact, correspond to the (quantizable) coisotropic reductions of $\mathcal{P}_\infty.$ 

Thus, for the initial K\"ahler polarization $\mathcal{P}_0$ and relative to the $T^p$-action, the question of unitarity in the quantization commutes with reduction correspondence is more naturally phrased as the question of unitary equivalence between quantizations with respect to  $\mathcal{P}_0$ and to $\mathcal{P}_\infty$. As described in Section \ref{newsubsec}, we believe that this perspective should be taken more generally, whenever a K\"ahler polarization is related to a ``Fourier" polarization (as in Section 3 of \cite{BHKMN}.)

(For a study of the problem of unitarity in the quantization commutes with reduction correspondence, with an emphasis on the semi-classical limit, see \cite{HK1}.)

{}

 Let $(M,\omega, J)$ be a toric variety determined by the Delzant polytope $P$. 
  Our main results are as follows: 

\begin{thm}[Theorem \ref{prop:evol-polarization}]
       For a choice of symplectic potential on $M$, $g$,    
     let $\mathcal{P}_s, s\geq 0$, be the family of K\"ahler polarizations associated to the symplectic potential $g+sH, s\geq 0$, which is obtained under the imaginary time flow of the Hamiltonian vector field $X_H$. Then the limiting polarization $\mathcal{P}_\infty$ exists, and
     $$\mathcal{P}_\infty:=  \lim_{s\to +\infty} \mathcal{P}_s $$ is a (singular) mixed polarization.
     Moreover, over the open dense $(\mathbb{C}^*)^n$-orbit $\mathring M,$ 
     $$
     \mathcal{P}_\infty = \langle  \partial/\partial\tilde \theta_{1}, \dots, \partial/\partial\tilde \theta_p, 
     X_{\tilde z_{p+1}}, \dots, X_{\tilde z_{n}},\rangle_{\mathbb C}= 
     $$
     $$
     =
     \langle  \partial/\partial\tilde \theta_{1}, \dots, \partial/\partial\tilde \theta_p, 
     \frac{\partial}{\partial{\tilde z_{p+1}}}, \dots, \frac{\partial}{\partial{\tilde z_{n}}}\rangle_{\mathbb C}.
     $$
\end{thm}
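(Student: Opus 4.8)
The plan is to work on the open dense orbit $\mathring M\cong(\mathbb{C}^*)^n$, where the family $\mathcal{P}_s$ is completely explicit in Guillemin--Abreu form, and to reduce the whole statement to the $s\to+\infty$ asymptotics of a single matrix. Fix the action--angle coordinates $(x,\theta)\in\mathrm{int}(P)\times T^n$, in which $\omega=\sum_k dx_k\wedge d\theta_k$ is independent of $s$, and recall that the complex structure attached to the symplectic potential $g_s=g+sH$ is carried by the holomorphic coordinates $w^{(s)}_j=\partial g_s/\partial x_j+i\theta_j$, $z^{(s)}_j=e^{w^{(s)}_j}$. With $G_s:=\mathrm{Hess}(g_s)=G_0+s\,\mathrm{Hess}(H)$ a direct computation gives
\[
\frac{\partial}{\partial \bar w^{(s)}_j}=\frac12\Big(\sum_k (G_s^{-1})_{jk}\frac{\partial}{\partial x_k}+i\frac{\partial}{\partial\theta_j}\Big),
\qquad
X_{z^{(s)}_j}=z^{(s)}_j\Big(i\frac{\partial}{\partial x_j}-\sum_m (G_s)_{jm}\frac{\partial}{\partial\theta_m}\Big),
\]
and $\mathcal{P}_s=\langle\partial/\partial\bar w^{(s)}_1,\dots,\partial/\partial\bar w^{(s)}_n\rangle_{\mathbb C}=\langle X_{z^{(s)}_1},\dots,X_{z^{(s)}_n}\rangle_{\mathbb C}$, the two frames being related by the invertible matrix $X_{z^{(s)}_j}=2i\,z^{(s)}_j\sum_l (G_s)_{jl}\,\partial/\partial\bar w^{(s)}_l$. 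Everything thus reduces to understanding $\lim_{s\to\infty}G_s^{-1}$.

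The decisive structural input is the hypothesis on $H$: it depends only on the subtorus moment coordinates $x_1,\dots,x_p$ and is strictly convex in them, so in the block splitting adapted to $T^p\times T^{n-p}$ one has $\mathrm{Hess}(H)=\mathrm{diag}(H'',0)$ with $H''\succ 0$, and writing $G_0=\left(\begin{smallmatrix}A&B\\ B^{\top}&C\end{smallmatrix}\right)$ I read off $G_s^{-1}$ from the Schur complement of the $(1,1)$-block $A+sH''$. As $s\to+\infty$ that block inverts to $O(1/s)$, the off-diagonal blocks are $O(1/s)$, and the $(2,2)$-block converges to the inverse of $C-B^{\top}(A+sH'')^{-1}B\to C$ (which is invertible, being a principal block of $G_0\succ0$). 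Hence
\[
G_s^{-1}\longrightarrow\begin{pmatrix}0&0\\ 0&C^{-1}\end{pmatrix},
\]
locally uniformly on $\mathrm{int}(P)$, since $H''\succeq\varepsilon\,\mathrm{Id}$ on compacta there. Feeding this back, for $j\le p$ the $\partial/\partial x_k$-components die and $\partial/\partial\bar w^{(s)}_j\to\tfrac{i}{2}\,\partial/\partial\theta_j$, while for $j>p$ one gets $\partial/\partial\bar w^{(s)}_j\to\tfrac12\big(\sum_{k>p}(C^{-1})_{jk}\,\partial/\partial x_k+i\,\partial/\partial\theta_j\big)$; note also that for $j>p$ both $z^{(s)}_j$ and the row $(G_s)_{jm}$ are independent of $s$ (as $\partial H/\partial x_j=0$), so $X_{z^{(s)}_j}$ is itself $s$-independent.

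I then identify the limit. The first $p$ fields are the real toric directions $\partial/\partial\tilde\theta_1,\dots,\partial/\partial\tilde\theta_p$; the remaining $n-p$ span the antiholomorphic directions of the residual Kähler structure on the $T^{n-p}$-factor, whose complex coordinates $\tilde z_j$ (for $j>p$ the limits of the flowed coordinates $e^{sX_H}z_j$) furnish the tildes. This already yields $\mathcal{P}_\infty=\langle\partial/\partial\tilde\theta_1,\dots,\partial/\partial\tilde\theta_p,X_{\tilde z_{p+1}},\dots,X_{\tilde z_n}\rangle_{\mathbb C}$: indeed, reducing $X_{z^{(s)}_j}$ $(j>p)$ modulo the real directions $\langle\partial/\partial\theta_l:l\le p\rangle$ leaves $2i\,\tilde z_j\sum_{l>p}(G_0)_{jl}\,\partial/\partial\bar w^{(\infty)}_l$, an invertible combination of the surviving antiholomorphic fields, so the span is exactly $\mathcal{P}_\infty$. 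The second displayed equality is then the coordinate form of this frame: passing to a mixed chart adapted to the local product structure over the reduced factor, the Hamiltonian fields $X_{\tilde z_j}$ become the coordinate fields $\partial/\partial\tilde z_j$ of the residual complex structure, which is a direct frame-change computation. Finally $\mathcal{P}_\infty$ has rank $n$, is isotropic as a pointwise limit of the Lagrangian $\mathcal{P}_s$ (isotropy is a closed condition), and is involutive because the $\partial/\partial\theta_l$, $l\le p$, are commuting generators of a torus-invariant setup that preserve the residual integrable complex directions; hence it is a genuine polarization.

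The Schur-complement algebra is routine; the real content, and the \emph{main obstacle}, is promoting the open-orbit limit to a globally defined, honestly singular mixed polarization. Two points require care. First, convergence must be controlled up to the boundary strata $\mu^{-1}(\partial P)$, where both $H''$ and the $T^p$-action degenerate; there I expect to argue stratum by stratum using the standard local normal forms of toric Kähler structures near a face of $P$, concluding that $\mathcal{P}_\infty$ extends across the toric divisors attached to $T^p$ but is singular precisely where the angles $\theta_1,\dots,\theta_p$ are undefined, which is what ``(singular) mixed'' records. Second, one must verify that the $n-p$ limiting complex directions assemble into an integrable distribution, so that the coordinates $\tilde z_j$ genuinely exist and the coordinate description is legitimate; this is exactly where the hypothesis that $H$ be convex only along $\mathrm{Im}\,\mu_p$, and independent of the remaining moment coordinates, is used essentially, since it is what decouples the surviving Kähler block $C$ from the degenerating directions. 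I expect this boundary and integrability analysis to be the delicate part of the argument.
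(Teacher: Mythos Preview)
Your argument on the open orbit is correct, but it takes a heavier route than the paper's. The paper observes directly that the flowed holomorphic coordinates satisfy $\tilde z^j_s=\tilde z^j_0+s\tilde x^j$ for $j\le p$ and $\tilde z^j_s=\tilde z^j_0$ for $j>p$ (immediate from $H=\tfrac12\sum_{j\le p}\tilde x_j^2$), so the generating Hamiltonian vector fields satisfy $X_{\bar{\tilde z}^j_s}=X_{\bar{\tilde z}^j_0}+sX_{\tilde x^j}$ for $j\le p$ and are constant for $j>p$; rescaling the first $p$ by $1/s$ gives $X_{\tilde x^j}=-\partial/\partial\tilde\theta_j$ in one line, with no matrix inversion needed. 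Your Schur-complement computation of $\lim G_s^{-1}$ is valid and reaches the same conclusion, but it is precisely the tool the paper reserves for the \emph{boundary} strata (their Proposition~\ref{prop_limippolontheboundary}), where the coordinate-flow shortcut is unavailable. So you have, in effect, run the boundary argument on the open orbit as well; this buys you a uniform treatment and a mild generalization to non-quadratic strictly convex $H$, at the cost of missing the one-line proof available in the interior.

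One small point: you flag integrability of the $n-p$ complex directions as a ``delicate'' obstacle, but in fact it is free here. For $j>p$ the coordinates $\tilde z_j$ are literally $s$-independent (as you yourself note), so the limiting complex directions are just those of the \emph{fixed} starting K\"ahler structure restricted to the level sets of $(\tilde x_1,\dots,\tilde x_p)$; integrability is inherited, and no decoupling argument is needed. The genuinely delicate part is only the boundary extension, which the paper also defers to a separate proposition.
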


So the polarization $\mathcal{P}_{\infty}$ is no longer real but mixed, and the holomorphic sections converge to distributional sections, which have a holomorphic factor along the complex directions of the limit polarization. In particular, when $H$ is a strictly convex function on $P$, this result coincides with \cite[Theorem 1.2]{BFMN}.

\begin{dfn}[Definition \ref{correctedQS}]
The quantum Hilbert space for the half-form corrected mixed polarization $\mathcal{P}_{\infty}$ is defined by 
$$
\mathcal{H}_{\mathcal{P}_{\infty}} =B_{\mathcal{P}_{\infty}} \otimes \sqrt{|dX_{1}^{p}\wedge dZ_{p+1}^{n}|},
$$
where $$B_{\mathcal{P}_{\infty}}=\{\sigma \in \Gamma(M, L^{-1})' \mid \nabla^{\infty}_{\xi} \sigma=0, \forall \xi \in \Gamma(M, \mathcal{P}_{\infty})\}.$$
\end{dfn}

\begin{thm}[Theorem\ref{thm_polsectionsinside}]
The distributional sections $\tilde{\sigma}^m_\infty, m\in P\cap \mathbb{Z}^n$, in (\ref{limitsections}), are in $\mathcal{H}_{\mathcal{P}_\infty}$. Moreover, 
for any $\sigma \in \mathcal{H}_{\mathcal{P}_\infty}$, $\sigma$ is a linear combination of the sections $\tilde{\sigma}^m_\infty, m\in P\cap \mathbb{Z}^n$. Therefore, the distributional sections $\{\tilde{\sigma}^m_\infty, m\in P\cap \mathbb{Z}^n\}_{m\in P\cap \mathbb{Z}^n}$ form a basis of $\mathcal{H}_{\mathcal{P}_\infty}$.
\end{thm}
This implies that the dimension of the quantum Hilbert space $\mathcal{H}_{\mathcal{P}_\infty}$ for the half-form corrected mixed polarization coincides with those of quantum spaces for real and K\"ahler polarizations.
When $p=n$, this result coincides with \cite[Theorem 4.7]{KMN1}.
Let $H^{prQ}$ be the Kostant-Souriau prequantum operator associated to $H$.
The gCST is then defined by the  $T^n-$equivariant linear isomorphism 
\begin{align*}
    U_{s} = (e^{s\hat{H}^{prQ}} \otimes e^{is \shL_{X_H}}) \circ e^{-s\hat H^Q} : \mathcal{H}_{\mathcal{P}_{0}} \to \mathcal{H}_{\mathcal{P}_{s}}, s \geq 0.
\end{align*}

\begin{thm}[Theorem \ref{convsections}]
    Let $s\geq 0.$
  \begin{align*}
      \lim_{s\to +\infty} U_{s}\sigma^m_{0} = \tilde{\sigma}^m_\infty, \, m\in P\cap \mathbb{Z}^n.
  \end{align*}
  \end{thm}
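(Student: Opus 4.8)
The plan is to reduce the statement to an explicit computation on the monomial basis, followed by an asymptotic (Laplace-type) concentration argument. First I would exploit the $T^n$-equivariance of $U_s$: since $\{\sigma^m_0\}_{m\in P\cap\Z^n}$ and the monomial sections $\{\sigma^m_s\}_{m\in P\cap\Z^n}$ spanning $\mathcal{H}_{\mathcal{P}_s}$ are bases of $T^n$-weight vectors with pairwise distinct weights $m$, equivariance forces $U_s\sigma^m_0 = c_m(s)\,\sigma^m_s$ for a scalar $c_m(s)$. It therefore suffices to determine $c_m(s)$ together with the explicit coordinate form of $\sigma^m_s$ over the open orbit $\mathring M$, and then to pass to the limit.

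Next I would unwind the three factors of $U_s$. In the monomial basis the quantum operator is diagonal, so $e^{-s\hat H^Q}$ acts on $\sigma^m_0$ by the scalar $e^{-s h_m}$, where $h_m$ is $H$ evaluated at the half-form--shifted integer point associated to $m$. The prequantum flow $e^{s\hat H^{prQ}}$ implements, via Kostant--Souriau, parallel transport along the imaginary-time flow of $X_H$ together with multiplication by an explicit exponential factor; using the symplectic potentials $g$ and $g+sH$ and their Legendre duality as in Theorem \ref{prop:evol-polarization}, this carries the $\mathcal{P}_0$-holomorphic monomial to the $\mathcal{P}_s$-holomorphic monomial written in the log-coordinates $z^{(s)}_j = z_j + s\,\partial H/\partial x_j$ adapted to $g+sH$. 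Simultaneously the half-form factor $e^{is\shL_{X_H}}$ turns $\sqrt{|dz^{(0)}|}$ into $\sqrt{|dz^{(s)}|}$. Assembling these yields a closed form for $c_m(s)\,\sigma^m_s$ whose pointwise norm on $\mathring M$ is proportional to $e^{-s\Phi_m(x)}$ with $\Phi_m\ge 0$.

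The heart of the argument is the limit $s\to+\infty$, which I would treat by separating the $p$ convex directions $1,\dots,p$ from the $n-p$ flat directions $p+1,\dots,n$. Since $H$ is convex only along $\mathrm{Im}\,\mu_p$, the derivative $\partial H/\partial x_j$ is constant for $j>p$, so $J_s$ stabilizes in the flat directions and the holomorphic coordinates $z^{(s)}_j$ converge to the limit coordinates $\tilde z_j$ of $\mathcal{P}_\infty$; the section thus keeps a genuine holomorphic factor $e^{\sum_{j>p} m_j\tilde z_j}$. Along the convex directions, strict convexity of $H$ forces $\Phi_m$ to attain its minimum value $0$ exactly on the Bohr--Sommerfeld set where the first $p$ components of $\mu$ equal the half-form--shifted values of $m_1,\dots,m_p$. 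A Laplace/stationary-phase estimate then shows that $e^{-s\Phi_m}$, once normalized by $c_m(s)$ and the accompanying Gaussian volume factor, concentrates as a distribution onto that set, while the half-form $\sqrt{|dz^{(s)}|}$ limits to the mixed half-form $\sqrt{|dX_{1}^{p}\wedge dZ_{p+1}^{n}|}$. The resulting limit carries a $\delta$-type singularity in the $p$ real directions times the holomorphic factor in the $n-p$ complex directions, which is exactly the distributional section $\tilde\sigma^m_\infty$ of (\ref{limitsections}); since these lie in $\mathcal{H}_{\mathcal{P}_\infty}$ by Theorem \ref{thm_polsectionsinside}, the convergence holds in the distributional topology of $\Gamma(M,L^{-1})'$.

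I expect this concentration step to be the main obstacle. In contrast with the fully real case $p=n$ of \cite[Theorem 4.7]{KMN1}, where the limit is a pure Bohr--Sommerfeld delta-section, the mixed setting requires carrying out the Laplace asymptotics only in the $p$ convex directions while simultaneously preserving holomorphy and uniform control in the remaining $n-p$ directions, and then checking that the product of $c_m(s)$ with the Gaussian normalization precisely reproduces the coefficient of $\tilde\sigma^m_\infty$. Verifying this matching of normalizations uniformly on compact subsets of $\mathring M$, and consistently across the toric boundary strata where $\mathcal{P}_\infty$ is singular, is the delicate part of the argument.
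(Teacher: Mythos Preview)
Your outline is essentially the same strategy as the paper's proof, but the paper's execution is far more direct because it exploits the specific quadratic form $H=\tfrac12\sum_{j=1}^p x_j^2$ of (\ref{simplehamiltonian}). There the operators are computed explicitly on $\mathring M$: one finds $\hat H^{prQ}=-\tfrac12\sum_{j\le p}x_j^2 - i\sum_{j\le p}x_j\partial_{\theta_j}$ and $e^{-s\hat H^Q}$ acts by the scalar $e^{-sH(m)}$, so that $U_s\sigma^m_0$ is literally a Gaussian $e^{-\tfrac{s}{2}\sum_{j\le p}(x_j-m_j)^2}$ times $W^m$ times the evolved half-form. The half-form computation $e^{is\shL_{X_H}}\sqrt{dZ}=\sqrt{s}^{\,p}\sqrt{dX_1^p\wedge dZ_{p+1}^n+O(1/s)}$ supplies exactly the factor $s^{p/2}$ needed for the elementary distributional limit $(s/2\pi)^{p/2}e^{-\tfrac{s}{2}\sum(x_j-m_j)^2}\to\prod\delta(x_j-m_j)$. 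No Laplace/stationary-phase machinery is invoked; the Gaussian-to-delta limit is immediate.

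Two corrections to your write-up. First, there is no ``half-form--shifted integer point'': by definition $\hat H^Q\sigma^m_0=H(m)\sigma^m_0$, so $h_m=H(m)$ on the nose (and $\partial H/\partial x_j=0$, not merely constant, for $j>p$). Second, your anticipated ``main obstacle'' --- matching normalizations and controlling boundary strata --- does not arise in the paper's proof: the convergence is taken as distributions on $\mathring M$, and the fact that the limit $\tilde\sigma^m_\infty$ extends to a well-defined distributional section across $\mu^{-1}(\partial P)$ is handled separately (via Lemma~3.3 of \cite{KMN1}) before this theorem, not within it. Your equivariance step recovering $U_s\sigma^m_0=c_m(s)\sigma^m_s$ is correct and is precisely Theorem~\ref{local-description}.
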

 This result establishes that there is an isomorphism $U_{\infty}$ between quantum spaces for the half-form corrected K\"ahler polarization and mixed polarization.   When $p=n$, this result coincides with \cite[Theorem 4.3]{KMN4}. 

 A natural hermitian structure can be defined on $\mathcal{H}_{{\mathcal P}_\infty}$ from the following
  
  \begin{thm}[Theorem \ref{lemma-norms}]
Let $\{\tilde{\sigma}^{m}_{s}:= U_{s}(\sigma_{0}^{m})\}_{m \in P \cap \mathbb{Z}^{n}}$ be the basis of $\mathcal{H}_{\mathcal{P}_{s}}$. Then we have:
$$\lim_{s \to \infty}\vert\vert \tilde{\sigma}^m_s\vert\vert_{L^2}^2 =  c_m \pi^{p/2},$$
where the constant $c_m$ is given by
$$
c_m =  \int_{P}  \left(\Pi_{j=1}^p\delta(x^j-m^j)\right) e^{-2((x-m)\cdot y -g_P)} (\det D)^\frac12dx^1\cdots dx^n.
$$
\end{thm}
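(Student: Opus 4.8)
The plan is to reduce $\|\tilde\sigma^m_s\|^2_{L^2}$ to an integral over the polytope $P$ and then extract the $s\to\infty$ limit by a Laplace-type concentration in the $p$ convex directions. First I would use the explicit description of $\tilde\sigma^m_s = U_s\sigma^m_0$ as the monomial section adapted to $\mathcal{P}_s$ (symplectic potential $g_s = g+sH$), together with the standard formula for the half-form corrected norm of toric monomial sections. Integrating out the torus angles on the open dense orbit $\mathring M$ (the complement is of measure zero), and writing $y_s = \partial g_s/\partial x = y + s\,\partial H/\partial x$ and $D_s = \mathrm{Hess}\,g_s = \mathrm{Hess}\,g + s\,\mathrm{Hess}\,H$, this gives
\[
\|\tilde\sigma^m_s\|^2_{L^2} = \int_P e^{-2(\langle x-m,y\rangle - g_P)}\; e^{-2s\,\beta_H(m;x)}\;(\det D_s)^{1/2}\,dx ,
\]
where $\beta_H(m;x) = H(m') - H(x') + \langle\nabla H(x'),\, x'-m'\rangle \ge 0$ is the Bregman-type divergence of the restriction of $H$ to the convex directions $x'=(x^1,\dots,x^p)$, vanishing exactly at $x'=m'$. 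I would verify this normalization first, since it is the crux that keeps the limit finite: the centring by $+H(m')$ (rather than a blow-up $e^{2sH(m')}$) is precisely what the prequantum and quantum eigenvalues in $U_s = (e^{s\hat H^{prQ}}\otimes e^{is\shL_{X_H}})\circ e^{-s\hat H^Q}$ produce on $\sigma^m_0$, since $H=H(x')$ acts on the monomial labelled by $m$ through its value $H(m')$.

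The heart of the argument is the analysis of $F_s(x') := e^{-2s\beta_H(m;x)}(\det D_s)^{1/2}$ as a measure in the $p$ variables $x'$. Because $\beta_H(m;\cdot)$ is convex in $x'$ with a nondegenerate minimum at $x'=m'$ whose Hessian is $\mathrm{Hess}_{x'}H(m')$, and because $\mathrm{Hess}\,H$ is supported on the $x'$-block, a Schur-complement expansion gives $\det D_s \sim s^p\,\det(\mathrm{Hess}_{x'}H)\,\det(\mathrm{Hess}_{x''}g)$ as $s\to\infty$. Rescaling $u=\sqrt{s}\,(x'-m')$ turns $F_s(x')\,dx'$ into a Gaussian whose normalization $\pi^{p/2}\det(\mathrm{Hess}_{x'}H)^{-1/2}$ exactly cancels the factor $\det(\mathrm{Hess}_{x'}H)^{1/2}$ produced by the determinant, leaving the distributional limit
\[
F_s(x')\,dx^1\cdots dx^p \;\longrightarrow\; \pi^{p/2}\,(\det D)^{1/2}\,\Big(\textstyle\prod_{j=1}^p\delta(x^j-m^j)\Big)\,dx^1\cdots dx^p,
\]
with $D=\mathrm{Hess}_{x''}g$ the Hessian block in the surviving complex directions. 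Substituting back and recognising the remaining $x'$-independent integrand evaluated at $x'=m'$ yields exactly $c_m\pi^{p/2}$; specialising to $p=n$ (empty $x''$-block, $\det D=1$) recovers $c_m = e^{2g_P(m)}$ and \cite[Theorem 4.7]{KMN1}.

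The main technical obstacle is to make this concentration rigorous rather than formal. The Gaussian lives on the noncompact model $\R^p$, whereas the true integration is over $P$, whose boundary carries the Guillemin-type singularities of $g$ and of $\det\mathrm{Hess}\,g$; I therefore need an $s$-uniform integrable majorant to justify the passage to the limit and to control the neighbourhood of $\partial P$ as well as any lattice point $m'$ sitting on a facet. Concretely, the plan is to split $P$ into a shrinking neighbourhood of the slice $\{x'=m'\}$, where the Laplace approximation and the determinant cancellation above are carried out, and its complement, where a uniform lower bound $\beta_H(m;x)\ge\epsilon>0$ forces decay like $e^{-2s\epsilon}$ against the fixed integrable density $e^{-2(\langle x-m,y\rangle - g_P)}(\det D)^{1/2}$. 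Establishing these uniform estimates near the boundary, and in particular bounding $\beta_H$ below by a positive multiple of $|x'-m'|^2$ uniformly in the transverse variables, is where the real work lies.
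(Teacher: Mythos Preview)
Your proposal is correct and follows essentially the same route as the paper: write the norm as an integral over $P$, use the block-determinant asymptotic $\det G_s \sim s^p\det D$, and apply a Laplace/Gaussian concentration in the $p$ convex variables so that the $s^{p/2}$ from the determinant cancels the $s^{-p/2}$ from the Gaussian, leaving $\pi^{p/2}c_m$. The only differences are cosmetic---you phrase the $s$-dependent exponent as a Bregman divergence (valid for general strictly convex $H$, which the paper treats only in a final remark) and you spell out the domination/boundary estimates needed to make the Laplace step rigorous, whereas the paper simply invokes \cite[Lemma~4.1]{KMN1}.
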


{}

Along the Mabuchi rays of toric K\"ahler structures that we are considering, for any finite value of $s\geq 0$, the correspondence given by quantization commutes with reduction is not unitary. 
However, this correspondence is unitary at infinite geodesics time $s\to \infty$. Let 
$M_{\underline m}$ be the K\"ahler reduction corresponding to the level set $\mu_p^{-1}(\underline m)$, for $\underline m \in \Delta \cap \mathbb{Z}^p$ and let $\mathcal{H}_{M_{\underline m}}$ be the Hilbert space for its K\"ahler quantization, so that 
$$
\mathcal{H}_{M_{\underline m}}=\left\{ \sigma^{\underline m, m'}, (\underline m, m')\in P\cap \mathbb{Z}^n\right\}.
$$
Then,

\begin{thm}[Theorem \ref{thmqcr}]
    The natural $T^n$-equivariant  linear isomorphism
\begin{eqnarray}\nonumber
\mathcal{H}_{\mathcal{P}_\infty} &\to& \bigoplus_{\underline m\in \Delta\cap\mathbb{Z}^n}\mathcal{H}_{M_{\underline m}}\\ \nonumber
\tilde \sigma^m_\infty & \mapsto & \sigma^{\underline m, m'},
\end{eqnarray}
for $(m_1, \dots, m_p)=\underline m$ and $m=(\underline m, m')\in P\cap \mathbb{Z}^n$
    is unitary up to the overall constant $\pi^{p/2}$.
\end{thm}

As expected from the general discussion in Section \ref{newsubsec}, we thus obtain
\begin{cor}[Corollary \ref{cor_qrunitary}]
The quantization commutes with reduction correspondence for the toric mixed polarization $\mathcal{P}_\infty$ is unitary (up to an overall constant). \end{cor}

Note that, however, that the quantizations with respect to $\mathcal{P}_0$ and $\mathcal{P}_\infty$ are not unitarily equivalent, which is a restatement of the fact that the quantization commutes with reduction correspondence for the initial K\"ahler polarization $\mathcal{P}_0$ is not unitary.

\section{Preliminaries}\label{section-prelim}

\subsection{Geometric quantization}\label{section:geometric-quantization}

In classical mechanics one works with real-valued functions on symplectic manifolds, respectively the physical observables on phase space, in the terminology of physicists. Geometric quantization is concerned with devising a consistent method of going from a classical system to a quantum system. The physical observables in quantum mechanics are no longer functions on phase space, but rather operators on a Hilbert space of quantum states.

\begin{dfn}
A symplectic manifold $(M,\omega)$ is \emph{quantizable} if there exists a hermitian line bundle $(L,h) \to M$ with compatible connection $\nabla$ of curvature $F_\nabla = -i\omega$. The triple $(L,\nabla,h)$ is called \emph{prequantization data}.
\end{dfn}
One can show \cite{Wo} that, in the compact case, $M$ is quantizable whenever
\begin{align}\nonumber
    \left[\frac{\omega}{2\pi}\right] \in H^2(M,\mathbb{Z}).
\end{align}
This is often referred to as the \emph{integrality condition}.
The \emph{prequantum Hilbert space} $L^2(M,L)$ is the $L^2$-completion of the space of sections of the hermitian line bundle $(L,h) \to M$ with respect to the scalar product given by
\begin{align}\nonumber
    \langle \psi, \psi \rangle : = \int_M h(\psi,\psi)\Omega,
\end{align}
where $\Omega = (1/n!)\omega^n$ is the Liouville volume form.

\begin{dfn}\label{dfn:kostant-souriau-operator}
Let $f \in C^\infty(M,\mathbb{C})$. The \emph{Kostant-Souriau prequantum operator} $\hat f^{prQ}$ is the (unbounded) operator on the prequantum Hilbert space given by
\begin{align}\nonumber
    \hat f^{prQ} = f + i\nabla_{X_f},
\end{align}
where $f$ stands for multiplication by $f$ and $X_f$ is the Hamiltonian vector field associated to $f$.
\end{dfn}

The prequantum Hilbert space, however, is too big, in the sense that the asignment $f \mapsto \hat f^{prQ}$ does not give an irreducible representation of the Poisson algebra $C^\infty(M)$. To improve on this, one needs to choose a \emph{polarization} $\mathcal{P}$ of $(M,\omega)$ that is, an integrable Lagrangian distribution in $TM \otimes \mathbb{C}$.

There are two special types of polarizations, namely, the \emph{real} polarizations (i.e. $\mathcal{P} = \bar{\mathcal{P}}$) and the \emph{complex} polarizations (i.e. $\mathcal{P} \cap \bar{\mathcal{P}} = \{0\}$). Moreover, in the event that $(M,\omega)$ is equipped with a compatible integrable almost complex structure, then the holomorphic tangent bundle $T^{1,0}M$ itself is a complex polarization, called the \emph{Kähler polarization}.

\begin{rmk}
The quantum space associated with a real, or mixed type, polarization, may consist of distributional (therefore non-smooth) sections when the polarization has non-simply-connected 
fibers which bring the need of imposing Bohr-Sommerfeld conditions.
\end{rmk}
The \emph{quantum Hilbert space} (associated to $\mathcal{P}$) is then defined as the closure of the space of polarized sections in the prequantum Hilbert space which, moreover, are required to satisfy some appropriate $L^2$ condition. Note that the quantum Hilbert space for a Kähler polarization would be given by the sections $s$ such that
\begin{align}\nonumber
    \nabla_{\frac{\partial}{\partial \bar z_j}}s = 0, 
\end{align}
for local holomorphic coordinates $(z_1, \dots, z_n)$ on $M$, 
which corresponds to the space holomorphic sections of $H^0(M,L)$.
Usually, the above framework is improved by including the so-called \emph{half-form correction}.
In the Kähler case, one introduces the canonical bundle
\begin{align}\nonumber
    K_M = \bigwedge^n(T^*M)^{1,0},
\end{align}
that is, the bundle whose sections are $(n,0)$-forms (and thus dependent on the complex structure). There is a natural metric on $K_M$ induced by the Kähler structure of $M$: if $\eta$ is an $(n,0)$-form, then
\begin{align}\nonumber
    \lVert \eta \rVert^2_{K_M} = \frac{\eta \wedge \bar\eta}{(2i)^n(-1)^{n(n+1)/2}\omega^n/n!}.
\end{align}
The half-form corrected quantum Hilbert space associated to a given Kähler polarization $\mathcal{P}$ is then given by the polarized sections of $L \otimes \sqrt{K_M}$, where $\sqrt{K_M}$ is a square root of $K_M$, that is, $\sqrt{K_M} \otimes \sqrt{K_M} \cong K_M$, with an appropriate connection on $\sqrt{K_M}$ induced by that of $K_M$. However, since $c_1(M) = -c_1(K_M)$, then this is possible if and only if $c_1(M)$ is even, which is not always the case. The line bundle whose sections comprise the Hilbert space would then satisfy
\begin{align}\nonumber
    c_1(L\otimes\sqrt{K_M}) = \left[\frac{\omega}{2\pi}\right] - \frac{c_1(M)}{2}.
\end{align}
This motivates us to consider instead, for the quantization procedure, a line bundle $L \to M$ with a different integrality condition, namely
\begin{align}\label{eq:half-form-integrality-condition}
    c_1(L) = \left[\frac{\omega}{2\pi}\right] - \frac{c_1(M)}{2} \in \mathbb{Z}.
\end{align}

\subsection{Toric K\"ahler geometry}\label{subsec_torickahlergeom}

Let $P$ be a Delzant polytope and $(M,\omega_P)$ the associated symplectic toric manifold with moment map $\mu$. The points of $M$ where the torus action is free make up a dense open subset $\mathring{M} = \mu^{-1}(\mathring{P})$, where $\mathring{P}$ is the interior of $P$. Moreover, $\mathring{M} \cong \mathring{P} \times T^n$ as symplectic manifolds, the latter with the symplectic structure inherited from $\mathbb{R}^n \times T^n$. 
One can describe toric Kähler structures with the aid of the so-called \emph{symplectic potential} \cite{Ab2,Gui2}. 
Denoting by $(x,\theta)$ the action-angle coordinates for $\mathring{P} \times T^n$, let
$P$ be given by the linear inequalities
\begin{align}\label{polytopeconditions}
l_r(x) = \langle x, \nu_r\rangle +\lambda_r\geq 0, \, r=1\, \dots, d,
\end{align}
where $\nu_j$ is the primitive inward pointing normal to the $jth$ facet of $P$. 
The function
\begin{align}\label{dfn:symp-potential}
    g_P(x) = \frac{1}{2}\sum_{r=1}^d l_r(x)\log l_r(x),
\end{align}
defined for $x \in \mathring{P}$, endows $M$ with a torus-invariant compatible complex structure $J_P$ given by the block matrix
\begin{align}\nonumber
    J_P =
    \begin{bmatrix}
    0 & -G_P^{-1} \\
    G_P & 0
    \end{bmatrix},
\end{align}
where $G_P$ is the Hessian of $g_P$ \cite{Gui2}. This is not the only compatible toric complex structure as shown by Abreu in \cite{Ab2}. More precisely,
\begin{thm}[\cite{Ab2}]
Let $(M,\omega)$ be the symplectic toric manifold associated to a given Delzant polytope $P$. If $J$ is a compatible toric complex structure, then
\begin{align}\label{eq:cpx-structure-symp-potential}
    J =
    \begin{bmatrix}
    0 & -G^{-1} \\
    G & 0
    \end{bmatrix}
\end{align}
where $G$ is the Hessian of
\begin{align}\label{eq:symp-potential-param}
    g = g_P + \phi,
\end{align}
$g_P$ being defined as in \eqref{dfn:symp-potential}, $\phi \in C^\infty(P)$. Moreover, $G$ is positive-definite on $\mathring{P}$ and satisfies the regularity condition
\begin{align}\nonumber
    \det G = \left(\delta(x)\prod_{r=1}^d l_r(x)\right)^{-1},
\end{align}
where $\delta$ is a smooth and strictly positive function on $P$.

Conversely, given a function $g$ as in \eqref{eq:symp-potential-param} with the above hypotheses, then $J$ as defined in \eqref{eq:cpx-structure-symp-potential} defines a compatible toric complex structure on $(M,\omega)$.
\end{thm}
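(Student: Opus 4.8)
The plan is to prove the two implications in tandem, the shared ingredient being the change between the action--angle coordinates $(x,\theta)$ on $\mathring M \cong \mathring P \times T^n$ and the logarithmic holomorphic coordinates on the open $(\C^*)^n$-orbit, mediated by a Legendre transform. For the forward direction, I would first observe that a compatible toric $J$ makes the $T^n$-action holomorphic, so the complexified action of $(\C^*)^n$ is holomorphic and its open orbit is biholomorphic to $(\C^*)^n$; writing $z_j = u_j + i\theta_j$ for the corresponding logarithmic coordinates, $J$ is the standard complex structure, i.e. $J\partial_{u_j} = \partial_{\theta_j}$ and $J\partial_{\theta_j} = -\partial_{u_j}$. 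Torus invariance forces the local Kähler potential $\phi$ to depend only on $u$, and positive-definiteness of the metric is equivalent to strict convexity of $\phi$. The moment map is then $x = \nabla_u\phi$, a diffeomorphism onto $\mathring P$, and I would define $g$ as its Legendre dual, so that $\nabla_x g = u$ and $G := \mathrm{Hess}_x\, g = (\mathrm{Hess}_u\, \phi)^{-1}$ is symmetric and positive-definite on $\mathring P$.

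Given this, the block form is a short computation: from $u_j = \partial g/\partial x_j$ one gets $\partial_{x_k} = \sum_j G_{jk}\,\partial_{u_j}$, and hence, applying $J$, one finds $J\partial_{x_k} = \sum_j G_{jk}\,\partial_{\theta_j}$ and $J\partial_{\theta_j} = -\sum_k (G^{-1})_{kj}\,\partial_{x_k}$, which is exactly \eqref{eq:cpx-structure-symp-potential}. The vanishing of the diagonal blocks is thus built into the Legendre duality: it reflects that the angle coordinates $\theta$ are common to both frames while $J$ interchanges $\partial_u$ and $\partial_\theta$.

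The technical heart, and the step I expect to be the main obstacle, is the behaviour at $\partial P$: I must show that $h := g - g_P$ extends smoothly to all of $P$ and that $\det G = (\delta \prod_{r=1}^d l_r)^{-1}$ with $\delta$ smooth and strictly positive. This is a local analysis near each facet $\{l_r = 0\}$, where the corresponding $\theta$-circle collapses in $M$. The requirement that $J$, equivalently the metric $\sum_{ij} G_{ij}\,dx_i dx_j + \sum_{ij} G^{ij}\,d\theta_i d\theta_j$, extend smoothly across the collapsing orbit forces the singular part of $G$ to match that coming from Guillemin's canonical potential $g_P$, whose $\tfrac12 l_r\log l_r$ terms produce a Hessian singularity of the form $\tfrac{1}{2 l_r}\,\nu_r\otimes\nu_r$; subtracting this prescribed singular behaviour leaves a remainder $h$ smooth up to the boundary, and the product of the normal-direction blow-ups yields the stated determinant formula.

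For the converse, I would start from $g = g_P + h$ and define $J$ by \eqref{eq:cpx-structure-symp-potential}. Then $J^2 = -\mathrm{Id}$ and compatibility with $\omega$ are immediate from the block form, while positive-definiteness of the induced metric is precisely the hypothesis that $G$ is positive-definite on $\mathring P$. Integrability is the key algebraic point: computing the Nijenhuis tensor in the $(x,\theta)$ frame, where all coefficients depend only on $x$, it vanishes exactly when $\partial G_{ij}/\partial x_k$ is totally symmetric in $i,j,k$, i.e. when $G$ is a Hessian, which holds by construction; equivalently, setting $u := \nabla_x g$ makes $z_j = u_j + i\theta_j$ genuine holomorphic coordinates. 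Finally, smoothness of $J$ on all of $M$, rather than merely on $\mathring M$, follows by reversing the boundary analysis of the previous paragraph, using the smoothness of $h$ on $P$ together with the determinant/regularity condition to verify that $J$ extends across the toric divisors.
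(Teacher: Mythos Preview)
The paper does not prove this theorem: it is quoted from \cite{Ab2} as background in Section~\ref{subsec_torickahlergeom}, with no argument given. There is therefore nothing in the paper to compare your proposal against.

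That said, your outline is essentially the standard proof as found in Abreu's and Guillemin's work. The forward direction via the Legendre transform between the logarithmic holomorphic coordinates $u$ and the action coordinates $x$, the derivation of the block form of $J$ from $u = \nabla_x g$, and the converse via the Nijenhuis tensor (vanishing precisely when $G$ is a Hessian) are all correct and are the key steps in the original argument. Your identification of the boundary analysis as the technical heart is also accurate: the matching of the singular part of $G$ with the $\tfrac{1}{2l_r}\nu_r\otimes\nu_r$ contribution from $g_P$, and the resulting determinant formula, is exactly where the Delzant condition enters and where most of the work in \cite{Ab2} lies. Your sketch is a faithful summary of that argument.
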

A function $g$ as above is called a \emph{symplectic potential}.
(There is, in addition, a coordinate chart associated to each vertex $v$ of $P$, whose construction is detailed, for instance, in \cite{KMN1}.)

Let $J$ be a compatible complex structure induced by a symplectic potential $g$ on $(M,\omega)$. Then $(\mathring{M},J) \cong \mathring{P} \times T^n$ as Kähler manifolds, the latter with the complex structure induced from $\mathbb{C}^n$, via the  biholomorphism
\begin{align}\label{dfn:biholomorphism}
    \mathring{P} \times T^n &\xrightarrow{\cong} (\mathbb{C}^*)^n \\
    (x,\theta) &\mapsto w = (e^{y_1+i\theta_1}, \dots, e^{y_n+i\theta_n}),
\end{align}
where $y_j = \partial g/\partial x^j$. The assignment $x \mapsto y = \partial g/\partial x$ is an invertible Legendre transform, with inverse given by $x = \partial \kappa/\partial y$, where $$\kappa = x(y)\cdot y - g(x(y))$$ is a Kähler potential given in terms of $g$. 
Associated to $P$ there is the line bundle $L = \mathcal{O}(D) \to M$, where
\begin{align}\nonumber
D = \sum_{r=1}^d \lambda_rD_r,
\end{align}
with each $\lambda_r$ a non-negative integer and each $D_r$ being the divisor corresponding to the inverse image by the moment map of the facet of $P$ defined by $\ell_r = 0$. With this in mind, the meromorphic section $\sigma_D$ of $L$ whose divisor is given by $D$ trivializes $L$ over $\mathring{M}$. Moreover, the divisor of the meromorphic function, whose expression in the dense open subset is given by
\begin{align}\label{eq:monomials}
    w^m = w_1^{m_1}\cdots w_n^{m_n}, \quad m = (m_1, \dots, m_n) \in \mathbb{Z}^n,
\end{align}
can be calculated to be \cite{CJH}
\begin{align}\nonumber
    \mathrm{div}(w^m) = \sum_{r=1}^d \langle m, \nu_r \rangle D_r.
\end{align}
Now, since the holomorphic sections of $L$ are generated by the sections which, under the trivialization $\sigma_D$, are given by monomials as in \eqref{eq:monomials} with effective divisors, i.e.
\begin{align}\label{eq:holomorphic-sections-integers}
    H^0(M,L) = \mathrm{span}_{\mathbb{C}}\{w^m \sigma_D : m \in \mathbb{Z}^n, \mathrm{div}(w^m\sigma_D) \geq 0\},
\end{align}
and $\mathrm{div}(w^m\sigma_D) \geq 0$ precisely when $l_r(m) = \langle m, \nu_r \rangle - \lambda_r \geq 0$ for $r = 1, \dots, d$, it follows that there is a correspondence between the toric basis of $H^0(M,L)$ and the integral points of $P$.

\subsection{Half-form corrected quantization in the toric case}
\label{corrq}

Let $(M,\omega, I)$ be a compact smooth toric K\"ahler manifold with toric complex
structure $I$ and
such
that $\left[  \frac{\omega}{2\pi}\right]  -\frac{1}{2}c_{1}(M)$ is an ample
integral cohomology class. Let $K_{I}$ be the canonical line bundle on $M$.
Let the moment polytope be
\begin{align}\label{t11}
P = \left\{x \in \R^n \ : \ \ell_r(x) = \nu_r \cdot x + \lambda_r \geq 0, \ \ j = 1, \dots, d\right\} ,
\end{align}
where we use the freedom of translating the moment polytope to choose
the  $\{\lambda_j\}_{j=1,\dots,d}$ to be half-integral and defined as follows.
We consider an equivariant complex line bundle $L\cong {\mathcal O}(\lambda^L_1 D_1+\cdots+\lambda^L_dD_d)$ as in \cite{KMN1} such that
$c_{1}(L) = \left[\frac{\omega}{2\pi}\right]  -\frac{1}{2}c_{1}(M)$, where the $\{\lambda^{L}%
_{j}\}_{j=1,\cdots, d}$ define a polytope with integral vertices, $P_{L}$.
The half-integral
$\{\lambda_j\}_{j=1,\dots,d}$ in (\ref{t11}) are then defined by
\begin{align}\label{halflambdas}
\lambda_j := \lambda^{L}_{j}+\frac12, ~j=1,\dots, d,
\end{align}
in accordance with the fact that ${\rm div}\,(dZ)= - D_1 \cdots -D_d$.
Within the open orbit $U_{0}$, the holomorphic $(n,0)$-form $dz$ is given by:
$dZ=dz^1\wedge \cdots \wedge dz^n = dW/w^{\mathbf{1}}$, with
\begin{equation}\label{partialz}
z=\,^{t}(z^{1},\dots,z^{n})=\partial g/\partial{x}+i{\theta}
\end{equation}
and $dW:=dw^{1}\wedge\cdots\wedge dw^{n}$, such that $dZ$ and $dW$ are trivializing sections of
${K_{M}}_{|_{U_{0}}}$. (Here, $\mathbf{1}=(1,\dots,1)$ so that
$w^{\mathbf{1}}=w^1\cdots w^n$.)
 Note that $P_L$ is obtained from
the moment polytope $P$ by
shifts of $\frac12$ along each of the integral primitive inward
pointing normals.
We will call $P_{L}\subset P$ the \textit{corrected polytope}.
 We equip the line bundle $L$ with a $U(1)$
connection, $\nabla^{I}$, whose curvature is given by 
\begin{align}
F_{\nabla^{I}}= -i\omega+\frac{i}
{2}\rho_{I}.
\end{align}
From the analysis in the previous section, we recognize that $\sqrt{|K_I|}$ and $\mu_I$ 
are always well-defined, even when $\sqrt{K_I}$ is
not. Based on this, the authors of \cite{KMN1} defined the quantum space for half-form corrected K\"ahler quantization of $(M, \omega, L, I)$ as follows:

\begin{dfn}\label{qhs}\cite[Definition 3.1]{KMN1}
The quantum Hilbert space for the half-form corrected K\"ahler quantization of
$(M,\omega,L,I)$ is defined by
\[
{\mathcal{H}}^{Q}_{I} = \mathcal{B}_{I}^{Q} \otimes\mu_{I},
\]
where
\[
\mathcal{B}_{I}^{Q}=\{s\in\Gamma(M, L): \nabla^{I}_{\overline{\mathcal{P}}_{I}}.
s=0\}.
\]
The inner product is defined by
\begin{align}\label{innerproduct}
\langle \sigma\otimes \mu_I ,\sigma' \otimes \mu_I\rangle = \langle \sigma,\sigma'\rangle = \frac{1}{(2\pi)^n}\int_M h^L(\sigma,\sigma') \frac{\omega^n}{n!}.
\end{align}
\end{dfn}

Now fix a choice of symplectic potential $g$ for the complex structure $I$
on $M$. We define the connection $\nabla^{I}$ on $L$ by 
\begin{align}\label{conn1}
\Theta_{v}&=-i\,{x}_{v}\cdot d{\theta}_{v}+\frac{i}{2}\sum_{k=1}^n d\theta_{v}^{k}+\frac
{i}{4}\left(  \frac{\partial}{\partial x_{v}}\log\det  G_{v}\right)  \cdot
G_{v}^{-1}d\theta_{v}\\
&  =-i\,{x}_{v}\cdot d{\theta}_{v}+\frac{i}{2}\mathrm{Im}\left(  \partial\log\det
G_{v}+\sum_{k=1}^{n}dz_{v}^{k}\right)=\frac{\nabla^{I}{\mathbf{1}}_{v}^{U(1)}}{{\mathbf{1}}_{v}^{U(1)}%
}. \nonumber
\end{align}

On the open orbit $\mathring{M}$, the connection forms are specified by
\begin{align}\label{conn2}
\Theta_{0}  &  :=-i\,{x}\cdot d{\theta}+\frac{i}{4}\left(  \frac{\partial
}{\partial x}\log\det G\right)  \cdot  G^{-1}d\theta\\
&  =-i\,{x}\cdot d{\theta+}\frac{i}{2}\mathrm{Im}\partial\log\det G.\nonumber
\end{align}
One may check that $\Theta_{v}-\Theta_{v^{\prime}}=d\log\tilde{g}_{v^{\prime
}v}^{L}$ and $\Theta_{v}-\Theta_{0}=d\log\tilde{g}_{0v}^{L}$ so that
$\{\Theta_{0},\Theta_{v}: v\in V\}$ does indeed define a $U(1)$-connection on $L$.

\begin{rmk}
Despite the singularity of $d\theta_v^j$ as $x_v^j \to 0$, the connection form $\Theta_v$ defined in (\ref{conn1}) remains non-singular over the open set $U_v$. This non-singularity can be verified either by analyzing the behavior of the matrix $G_v$ or by using the alternative coordinates $\{a_v^j, b_v^j\}_{j=1,\dots,n}$ which regularize the expression.
\end{rmk}

\subsection{Complex-time dynamics and quantization}\label{section:cpx-time}
We shall collect here the results in \cite{BFMN, MN, SZ} concerning the imaginary time flow formalism in the concrete case of symplectic toric manifolds. Recall that if $(M,\omega_0,J_0)$ is a compact K\"ahler manifold with K\"ahler form $\omega_0$, the space of K\"ahler metrics on $M$ in the same K\"ahler class is given in terms of global relative K\"ahler potentials as 
\begin{align}\nonumber
    \mathcal{H}(\omega_0) = \{\rho \in C^{\infty}(M) \mid \omega_0 + i\partial\bar\partial\rho > 0\}/\mathbb{R},
\end{align}
which has a natural infinite-dimensional smooth manifold structure as it is an open subset in $C^\infty(M)$ (modulo constants), the latter considered with the topology of uniform convergence on compact sets of the functions and their derivatives of all orders. Moreover, from this fact, the tangent vectors are simply functions on $M$. This space is equipped with the so-called \emph{Mabuchi metric} \cite{M}, given by
\begin{align}\nonumber
    \langle \delta_1 \rho, \delta_2 \rho \rangle = \int_M\frac{1}{n!}(\delta_1\rho\cdot\delta_2\rho) \omega_\rho^n,
\end{align}
where the functions $\delta_1 \rho, \delta_2 \rho\in C^\infty(M)$ are tangent vectors at $\rho \in \mathcal{H}(\omega_0)$ and $\omega_\rho = \omega_0 + i\partial\bar\partial\rho$. From Moser's theorem, there are diffeomorphisms defining equivalent K\"ahler structures $\varphi_\rho:(M,\omega_\rho,J_0)\to (M,\omega_0,J_\rho)$, so that the K\"ahler metrics can then be described in terms of a varying complex structure $J_\rho$ for fixed symplectic form $\omega_0$. 

Let now $(M,\omega)$ be the symplectic toric manifold defined by the Delzant polytope $P$ and let $h$ be a smooth uniformly convex function on $P$. It is known (see \cite{SZ}) that the family of symplectic potentials
$$
g_s = g + sh, s \geq 0,
$$
defines a Mabuchi geodesic ray of toric K\"ahler structures. Let $\mathcal{P}_s$ denote the K\"ahler polarization defined by the complex structure $J_s$ on $(M,\omega_P)$ by the symplectic potential $g_s$. One has from \cite{BFMN}, pointwise in the Lagrangian Grassmannian along $\mathring{M}_P$ , 
$$
\lim_{s\to +\infty} \mathcal{P}_s = \mathcal{P}_\mathbb{R}, 
$$
where $\mathcal{P}_\mathbb{R}$ is the toric real polarization defined by the fibers of the moment map $\mu.$
As dscribed above, let now $L \to M$ be the smooth line bundle with first Chern class
\begin{align}\nonumber
    c_1(L) = \frac{\omega}{2\pi} - \frac{c_1(M)}{2}.
\end{align}
From \cite{KMN1}, we know that the half-form corrected Hilbert space $\mathcal{H}_{\mathcal{P}_{s}}$ associated to the polarization $\mathcal{P}_{s}$ is given by
\begin{align}\label{kahlerquants}
    \mathcal{H}_{\mathcal{P}_{s}} = \left\{\sigma_s^m = w_s^m e^{-k_s(x)}\mathbf{1}^{U(1)} \otimes \sqrt{dZ_s}\right\}, \text{ }m \in P \cap \mathbb{Z}^p,
\end{align}
where $$k_s(x) = x \cdot \frac{\partial g_s}{\partial x} - g_s(x),$$ is the K\"ahler potential, $w_s^j = e^{y^j_s + i\theta_j}$, $y_s^j =  \frac{\partial g_s}{\partial x_j}$, and 
$\mathbf{1}^{U(1)}$ is a local trivializing section for $l$, so that 
$\mathbf{1}^{U(1)} \otimes \sqrt{dZ_s}$ is a trivialization of $L$ over the open dense subset $\mathring M$. While $L$ is not necessarily the tensor product of a prequantum line bundle with a square root of the canonical bundle of $M$, we write local sections in this way as their product is well-defined (cf. \cite{Wo}, \S 10.4).


Let $\hat h^{prQ}$ be the prequantum operator associated to $h$. 
From \cite{KMN1,KMN4}, we have a linear $T^n -$equivariant isomorphism 
$$
e^{s\hat h^{prQ}} \otimes e^{is\shL_{X_H}} : \mathcal{H}_{\mathcal{P}_{t}} \to \mathcal{H}_{\mathcal{P}_{t+s}}, t, s\geq 0,
$$
with
\begin{align}\label{eq:prq-evolution}
   e^{s\hat h^{prQ}} \otimes e^{is\shL_{X_H}}   \sigma_t^m = \sigma_{t+s}^m, \text{ }m \in P \cap \mathbb{Z}^n.
\end{align}
 
 Following \cite{KMN1,KMN4}, we define the quantum operator $h^Q:\mathcal{H}_{\mathcal{P}_{t}}\to \mathcal{H}_{\mathcal{P}_{t}}$ associated to $h$ by
 $$
 h^Q \sigma_t^m = h(m) \sigma_t^m, \, m\in P \cap \mathbb{Z}^n.
 $$
 
 The generalized coherent state transform $U_s, s>0$, which lifts the imaginary Hamiltonian flow along the Mabuchi geodesics generated by $h$ to the bundle of half-form corrected polarized Hilbert spaces, is defined as the linear isomorphism
 \begin{align}\label{defcst}
    U_{s} = (e^{s\hat{h}^{prQ}} \otimes e^{is\shL_{X_H}}) \circ e^{-s\hat h^Q} : \mathcal{H}_{\mathcal{P}_t} \to \mathcal{H}_{\mathcal{P}_{t+s}}.
\end{align}
We now recall 

\begin{thm}\label{thmkmn2}\cite[Theorem 4.3]{KMN4}
 Let $\left\{\delta^m\right\}_{m\in P \cap \mathbb{Z}^n}$ be the Bohr-Sommerfeld basis of distributional sections for the half-form corrected Hilbert space of $\mathcal{P}_\mathbb{R}-$ polarizaed sections (see \cite{BFMN,KMN1}). Then,
 $$
 \lim_{s\to +\infty} U_s \sigma_0^m = (2\pi)^{n/2} e^{g(m)} \delta^m.
 $$
\end{thm}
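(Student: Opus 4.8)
The plan is to reduce the generalized coherent state transform to an explicit scalar problem and then extract the limit by a Laplace-type concentration argument. First I would use that $\sigma_0^m$ is an eigenvector of the quantum operator, $\hat h^Q\sigma_0^m = h(m)\sigma_0^m$, so that $e^{-s\hat h^Q}\sigma_0^m = e^{-sh(m)}\sigma_0^m$; combining this with the prequantum intertwining relation \eqref{eq:prq-evolution} (taken at $t=0$) yields the clean identity $U_s\sigma_0^m = e^{-sh(m)}\sigma_s^m$. Writing $\sigma_s^m = w_s^m e^{-k_s(x)}\mathbf 1^{U(1)}\otimes\sqrt{dZ_s}$ in action-angle coordinates and substituting $g_s = g_P + sh$, $y_s = \nabla g_P + s\nabla h$, the $L$-component becomes $e^{(m-x)\cdot\nabla g_P(x)+g_P(x)}\, e^{s\Phi_m(x)}\, e^{im\cdot\theta}\,\mathbf 1^{U(1)}$, where $\Phi_m(x) := h(x) + (m-x)\cdot\nabla h(x) - h(m)$, using $\lVert\mathbf 1^{U(1)}\rVert_{h^L}=1$.

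The key analytic input is that strict convexity of $h$ forces $\Phi_m(x)\le 0$, with equality if and only if $x=m$ (this is exactly the statement that the graph of $h$ lies above its tangent hyperplane at $x$, evaluated at the point $m$). Hence $e^{s\Phi_m}$ concentrates, as $s\to+\infty$, on the fiber $\mu^{-1}(m)$. Here it is essential that the half-integral shift \eqref{halflambdas} places every $m\in P\cap\mathbb Z^n$ at distance at least $1/2$ from $\partial P$, so that the concentration point lies in the interior $\mathring P$ and no boundary asymptotics are needed. I would then test $U_s\sigma_0^m$ against a smooth section supported in $\mathring M\cong\mathring P\times T^n$ and evaluate the limit: the $T^n$-integral of $e^{im\cdot\theta}$ extracts precisely the $m$-th Fourier mode, which is the defining property of the Bohr--Sommerfeld distribution $\delta^m$ supported on $\mu^{-1}(m)$, while the $x$-integral is governed by Laplace's method at the nondegenerate interior maximum $x=m$, where $\mathrm{Hess}\,\Phi_m(m) = -\mathrm{Hess}\,h(m)$ is negative-definite.

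It remains to identify the constant, and this is where the half-form correction does the decisive work. Laplace's method produces the Gaussian factor $(2\pi)^{n/2}s^{-n/2}(\det\mathrm{Hess}\,h(m))^{-1/2}$, which by itself would vanish as $s\to\infty$. This is compensated exactly by the half-form: since $z_s = \nabla g_s + i\theta$ gives $dz_s^j = \sum_k(G_s)_{jk}\,dx^k + i\,d\theta^j$ with $G_s = \mathrm{Hess}\,g_P + s\,\mathrm{Hess}\,h$, one has $dZ_s = \det(G_s)\,dx^1\wedge\cdots\wedge dx^n + O(s^{n-1})$, so that, relative to the real-polarization half-form $\sqrt{|dx|}$ entering $\delta^m$, $\sqrt{dZ_s} = (\det G_s)^{1/2}\sqrt{|dx|}\,(1+O(1/s))$, a coefficient growing like $s^{n/2}(\det\mathrm{Hess}\,h(x))^{1/2}$. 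The powers of $s$ cancel and, because both determinant factors are evaluated at the concentration point $x=m$ in the limit, the $\det\mathrm{Hess}\,h(m)$ factors cancel as well, leaving the pure constant $(2\pi)^{n/2}$; the residual prefactor $e^{(m-x)\cdot\nabla g_P(x)+g_P(x)}$ evaluated at $x=m$ contributes $e^{g_P(m)}$. Together with the matching of the $\theta$-mode to $\delta^m$, this yields $\lim_{s\to+\infty}U_s\sigma_0^m = (2\pi)^{n/2}e^{g_P(m)}\delta^m$.

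The main obstacle I expect is making the half-form convergence rigorous rather than heuristic: $\sqrt{dZ_s}$ is a complex half-form whose imaginary part $i\,d\theta$ does not disappear, and one must show both that its coefficient matches $(\det G_s)^{1/2}$ to the order needed (the $i\,d\theta$ contributions entering only at $O(s^{n-1})$, hence not affecting the leading symbol) and that $\sqrt{dZ_s}$ converges, in the appropriate distributional topology on $\Gamma(M,L^{-1})'$, to $\sqrt{|dx|}$. Controlling the interplay between this degenerating half-form, the uniformity of Laplace's method in a neighborhood of $x=m$, and the exact bookkeeping of the $2\pi$-normalizations coming from the inner product \eqref{innerproduct} and the $\theta$-Fourier integral is the delicate part; everything else is the concentration argument above.
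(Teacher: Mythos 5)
Your proposal is correct and follows essentially the same route as the paper, which recalls this statement from \cite{KMN4} without proof but proves its $p<n$ generalization (Theorem \ref{convsections}) by exactly your mechanism: the factorization $U_s\sigma_0^m=e^{-sh(m)}\sigma_s^m$, concentration of $e^{s\Phi_m}$ onto the fiber $x=m$ (with the half-integral shift keeping $m$ in $\mathring P$), and compensation of the decaying Laplace factor by the $s^{n/2}$-growth of $\sqrt{dZ_s}$ coming from $(\det G_s)^{1/2}$. The only difference is that the paper's displayed computation takes the quadratic Hamiltonian, so your genuine Laplace step with the cancellation of $\det\mathrm{Hess}\,h(m)$ between the Laplace prefactor and the half-form coefficient reduces there to an exact Gaussian identity with $\mathrm{Hess}\,H=I$; your more general version is what \cite[Theorem 4.3]{KMN4} actually requires for uniformly convex $h$, and your bookkeeping of $e^{g_P(m)}$ and $(2\pi)^{n/2}$ matches the paper's $W^m$ and $\sqrt{2\pi}^p$ factors specialized to $p=n$.
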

 In the following we will generalize this theorem to the case when the hamiltonian function generating the Mabuchi geodesic ray of toric K\"ahler metrics is convex only along a subset of the moment map coordinates so that the limit polarization becomes a mixed polarization.

\section{``$[Q,R]=0$" for toric manifolds: a new perspective}
\label{chapter:mixed-polarizations}
Let $M$ be a $n$--dimensional toric manifold and $T^p$ be a fixed torus subgroup of the toric group, $T^n$. In this section we consider
mixed polarizations, 
${\mathcal P}_\infty$, 
associated with the Hamiltonian action of $T^p$  on $M$, with 
moment map $\mu_p$. These are toric polarizations with real directions given by the Hamiltonian vector fields of the components of $\mu_p$. Thus, for such a polarization, the (prequantum operators corresponding to the) components act diagonally and the level sets 
of $\mu_p$ correspond to symplectic reductions of $M$ with respect to the action
of $T^p$.

We will obtain such polarizations by looking at the evolution of the Kähler polarization of a toric Kähler manifold under the imaginary time flow of an appropriate Hamiltonian, namely a smooth convex function of the moment map for the Hamiltonian action of the subtorus $T^p\subset T^n.$

\subsection{``Quantization commutes with Reduction"}
\label{newsubsec}

In this section, we describe general features of appropriate toric mixed polarizations for which the 
discussion of the quantization commutes with reduction correspondence is naturally placed. These are associated to the action of a subtorus $T^p\subset T^n$, with moment map $\mu_p$ and moment polytope $\Delta.$  We 
call such polarizations ``Fourier polarizations".  (See Section 3 in \cite{BHKMN}.) In the next section, we will then describe explicitly 
mixed toric polarizations $\mathcal{P}_\infty$ which are Fourier polarizations. 

We have the diagram
$$
M/T^p \stackrel{\pi}{\longleftarrow} M \stackrel{\mu_p}{\longrightarrow} \Delta ,
$$
where $\pi$ denotes the quotient map, and also the map $\alpha: M/T^p \to \Delta$ such that $\alpha \circ \pi = \mu_p.$

The fibers of $\mu_p$ are $T^p$ principal bundles in $M$ and that the real directions in 
$\mathcal{P}_\infty$ are given by the $T^p$ orbits. The fibers of $\alpha$ then 
give the coisotropic reductions for $\mathcal{P}_\infty$ and they correspond to the K\"ahler reductions 
$$
M_c = \mu_p^{-1}(c)/T^p
$$
with K\"ahler structure given by the reduction of the K\"ahler structure on $M$ defined by the symplectic potential $g$. (See Theorem \ref{prop:evol-polarization} where this is explicit.) This is an example of a ``fibering polarizations", as considered in Section 3 of \cite{BHKMN} (see also \cite{BFHMN}), more precisely $\mathcal{P}_\infty$ is a ``Fourier polarization", as in Definition 3.18 in \cite{BHKMN}. 

As will be seen quite explicitly below in Theorem \ref{prop:evol-polarization}, the toric K\"ahler structures along the coisotropic reductions of $\mathcal{P}_\infty$ do not evolve along the Mabuchi ray, so that, at infinite geodesic time, one will obtain that the quantizations of the coisotropic reductions of $\mathcal{P}_\infty$ coincide with the K\"ahler quantizations of the K\"ahler reductions of the initial K\"ahler structure. (See Section \ref{subsecronaldo} below for further details.) 

{}

More generally, suppose that a symplectic manifold $M$ carries an effective Hamiltonian action of a torus $T^p$ with moment map $\mu_p= (\mu^1,\dots, \mu^p)$. Let 
$\mathcal{P}_F$ be a Fourier polarization on $M$, so that its real directions are given by the orbits of $T^p$, that is they are generated by the Hamiltonian vector fields of the components of $\mu_p$, and the holomorphic directions give an ample K\"ahler structure on the coisotropic reductions of $\mathcal{P}_F.$ For this type of mixed polarizations,  with respect to the $T^p$-action, the quantization commutes with reduction correspondence becomes, almost, a tautology.

Indeed, given what the real and holomorphic directions in $\mathcal{P}_F$ are, one expects that, generally, $\mathcal{P}_F$-polarized sections should have the form of sums of products of a factor given by a function of the components of $\mu_p$ times a factor holomorphic along the coisotropic reductions. 

Reducing after quantizing just corresponds to fixing the values of the components of $\mu_p$ in the polarized sections, according to the chosen level set of $\mu_p$. Moreover, since the components of $\mu_p$ are $\mathcal{P}_F$-polarized, their Kostant-Souriau prequantum operators will act on $\mathcal{P}_F$-polarized sections just by multiplication operators. 
Thus, after reduction they will act just by multplication by the appropriate constant. This a natural property to demand for states that correspond to states in the quantization of the symplectic reduction. 

On the other hand, for a given level set $\mu_p=c$, in the first reduce then quantize approach one then just obtains the states which are holomorphic with respect to the K\"ahler structure on the corresponding coisotropic reduction $M_c = \mu_p^{-1}(c)/T^p$. Thus, at the level of linear spaces,
$$
\mathcal{H}_{\mathcal{P}_F} = \bigoplus_{c} \mathcal{H}_{M_c},
$$
where $\mathcal{H}_{M_c}$ is the K\"ahler quantizations of $M_c$ and where the sum goes over quantizable coisotropic reductions of $\mathcal{P}_F$. Thus, quantization commutes with reduction is a natural consequence of the general properties of the Fourier polarization $\mathcal P_F$.

Regarding the problem of unitarity in the quantization commutes with reduction correspondence, it follows from the above description that for a fixed level set, $\mu_p=c$, one obtains that the 
quantization commutes with reduction correspondence is unitary, up to an overall constant factor. However, there is, a priori, the possibility that this overall factor actually depends on the level set. 
{}
Typically, the quantization commutes with reduction correspondence is considered for a given $T^p$-invariant K\"ahler structure, $\mathcal{P}_0$, on the ambient symplectic manifold and for the induced structures on the corresponding K\"ahler quotients. If for this K\"ahler polarization one can define an associated Fourier polarization $\mathcal{P}_F$, as above, with matching K\"ahler structures along the K\"ahler and coisotropic reductions, respectively, then the question of unitarity in the quantization commutes with reduction for $\mathcal{P}_0$ can be more naturally formulated simply 
has the problem of whether or not the quantizations with respect to $\mathcal{P}_0$ and with respect to $\mathcal{P}_F$ are unitarily equivalent.

In Section \ref{subsecqcr}, in the toric case, we will show that the polarization $\mathcal{P}_\infty$ obtained in Section \ref{subsec_geodraysconv} is a Fourier polarization where the above description holds, and where the quantization commutes with reduction correspondence is unitary. In this case, moreover, the unitary correpondece is obtained for all coisotropic reductions at once, with no need for choosing level-dependent constants to achieve unitarity. The fact that for a given initial toric K\"ahler polarization $\mathcal{P}_0$ the quantization commutes with reduction correspondence is not unitary becomes, more naturally, the fact that quantizations with respect to $\mathcal{P}_0$ and $\mathcal{P}_\infty$ are not unitarily equivalent. Note that unitarity in quantization commutes with reduction holds only in the case when $M=(\mathbb{C}^*)^n$ and the toric K\"ahler structure is flat.

{}

\subsection{Geodesic rays converging to mixed polarizations}

\label{subsec_geodraysconv}

Let $T^{p}$ denote a subtorus of $T^{n}=\mathbb{R}^n/\mathbb{Z}^n$. In this subsection, we describe the mixed polarization (which we denote by $\mathcal{P}_\infty$) associated with the action of the subtorus $T^{p}$ and a toric complex structure $J$ determined by a symplectic potential $g$. $\mathcal{P}_\infty$ is a Fourier polarization, in the sense described in Section \ref{newsubsec}.

Associated to the subtorus $T^p$ we have a primitive sublattice of $\mathbb{Z}^n$, such that, infinitesimally, the Lie algebra 
inclusion  $\frak{t}^{p} \subset \frak{t}^{n}$ is represented by $p$ primitive vectors in $\mathbb{Z}^n$ generating the sublattice. Let $\hat B$ be the $(p\times n)$-matrix with those vectors 
in the rows. $\hat B$ is then the upper $(p\times n)$-block of a matrix $B \in SL(n,\mathbb{Z})$. (See Section 1 of \cite{AH} and Section 3 in Chapter 1 of \cite{GL}.)

The action of $T^{p}$ on $M$ is a Hamiltonian action, denoted by $\rho_{p}: T^{p} \rightarrow \mathrm{Diff}(M,\omega,J)$ and the corresponding moment map $\mu_{p}$ for this action is given by 
$$\mu_p=\hat B \circ \mu.$$ According to \cite{LW1} or \cite{W1},
the mixed (singular) polarization $\mathcal{P}_\infty$ can be defined as follows:
\begin{align}
\mathcal{P}_\infty = (\mathcal{D}_{\mathbb{C}} \cap P_{J} ) \oplus \mathcal{I}_{\mathbb{C}},
\end{align}
where $\mathcal{D}_{\mathbb{C}}=(\ker d \mu_{p} \otimes \CC)$ and $\mathcal{I}_{\mathbb{C}}=(\mathrm{Im} d \rho_{p} \otimes \CC)$. We denote the moment polytope for the Hamiltonian $T^p$ action by $\Delta = \mu_p(M).$

We will consider Mabuchi geodesic rays of toric K\"ahler structures generated by Hamiltonian functions which are smooth convex functions on $\Delta$. As we will describe, Theorem 1.2 
in \cite{BFMN} generalizes and one obtains a mixed toric polarization at infinite Mabuchi geodesic time (refer to \cite{BFMN,LW1,W1}). 


 Consider the affine change of action-angle coordinates on $\mathring M$,
 $$\tilde x = B\cdot x, \,\,\tilde \theta = {}^t{B^{-1}} \cdot \theta,$$ 
    such that the moment polytope $P$ for the $T^n$-action is now described by the linear inequalities
    $$
    \tilde l_j (\tilde x) = \langle \tilde x, \tilde \nu_j \rangle - \lambda_j \geq 0,\,\,\, j=1, \dots r,
    $$
where $\tilde \nu _j = {}^tB^{-1} \nu_j$ gives the primitive normals to the facets of $P$ in the new affine coordinates. Note that one has, for the holomorphic coordinates on $\mathring M$ associated to a symplectic potential $g$, 
$$
\tilde z_j = \frac{\partial g}{\partial \tilde x_j} + i\tilde \theta_j = 
\sum_{k=1}^n B^{-1}_{kj} z_k,\,\, j=1, \dots, n,
$$
where $$z_k= \frac{\partial g}{\partial x_k} + i \theta_k,\, k=1, \dots, n.$$
Let 
\begin{align}\label{simplehamiltonian}
H = \frac12 \sum_{j=1}^p \tilde x_j^2.    
\end{align}

By the same argument as in the proof of Theorem 1.2 in \cite{BFMN}, Theorem 3.20 in \cite{LW1} and Theorem 3.5 in \cite{W1} we obtain,

\begin{thm}\label{prop:evol-polarization}
       For a choice of symplectic potential on $M$, $g$,    
     let $\mathcal{P}_s, s\geq 0$, be the family of K\"ahler polarizations associated to the symplectic potential $g+sH, s\geq 0$, which is obtained under the imaginary time flow of the Hamiltonian vector field $X_H$. Then the limiting polarization $\mathcal{P}_\infty$ exists, and
     $$\mathcal{P}_\infty:=  \lim_{s\to +\infty} \mathcal{P}_s $$ is a (singular) mixed polarization.
     Moreover, over the open dense $(\mathbb{C}^*)^n$-orbit $\mathring M,$ 
     $$
     \mathcal{P}_\infty = \langle  \partial/\partial\tilde \theta_{1}, \dots, \partial/\partial\tilde \theta_p, 
     X_{\tilde z_{p+1}}, \dots, X_{\tilde z_{n}},\rangle_{\mathbb C}= 
     $$
     $$
     =
     \langle  \partial/\partial\tilde \theta_{1}, \dots, \partial/\partial\tilde \theta_p, 
     \frac{\partial}{\partial {\tilde z_{p+1}}}, \dots, \frac{\partial}{\partial{\tilde z_{n}}}\rangle_{\mathbb C}.
     $$
\end{thm}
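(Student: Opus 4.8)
The plan is to work over the open orbit $\mathring M$ in the adapted action--angle coordinates $(\tilde x,\tilde\theta)$ determined by $B\in SL(n,\mathbb{Z})$, in which $\omega=\sum_k d\tilde x_k\wedge d\tilde\theta_k$ is unchanged and the symplectic potential of $\mathcal P_s$ is $g_s=g+sH$ with $H=\tfrac12\sum_{j=1}^p\tilde x_j^2$. The decisive first observation is that $\mathrm{Hess}_{\tilde x}(H)=\Pi_p$, the projection onto the first $p$ coordinates, so the Hessian of $g_s$ is the affine family $\tilde G_s=\tilde G+s\,\Pi_p$ with $\tilde G=\mathrm{Hess}_{\tilde x}(g)$. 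Since $\mathcal P_s=T^{1,0}_{J_s}\mathring M$ is spanned by the holomorphic coordinate fields
\[
\frac{\partial}{\partial\tilde z_j^s}=\frac12\sum_{k=1}^n(\tilde G_s^{-1})_{kj}\,\frac{\partial}{\partial\tilde x_k}+\frac{1}{2i}\,\frac{\partial}{\partial\tilde\theta_j},
\]
the entire asymptotic analysis reduces to the single matrix limit $\lim_{s\to\infty}\tilde G_s^{-1}$.

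I would extract that limit by a Schur--complement inversion relative to the splitting $\{1,\dots,p\}\sqcup\{p+1,\dots,n\}$. Writing $\tilde G=\begin{pmatrix}A&B^{t}\\ B&C\end{pmatrix}$ (with $C$ positive definite, hence invertible, as a principal block of the positive-definite $\tilde G$), the relevant Schur complement is $A+sI_p-B^{t}C^{-1}B\sim sI_p$, whose inverse tends to $0$ and drags the two off-diagonal blocks of $\tilde G_s^{-1}$ to $0$, while the lower-right block converges to $C^{-1}$; thus $\tilde G_s^{-1}\to\mathrm{diag}(0,C^{-1})$. Feeding this into the frame gives $\partial/\partial\tilde z_j^s\to\frac{1}{2i}\,\partial/\partial\tilde\theta_j$ for $j\le p$, and $\partial/\partial\tilde z_j^s\to\frac12\sum_{k>p}(C^{-1})_{kj}\,\partial/\partial\tilde x_k+\frac{1}{2i}\,\partial/\partial\tilde\theta_j$ for $j>p$. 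The second field is exactly the holomorphic coordinate field of the \emph{transverse} toric K\"ahler structure whose Hessian is the block $C$; this is why the symbol $\partial/\partial\tilde z_j$ in the statement, for $j>p$, must be read as this transverse field (the limit of $\partial/\partial\tilde z_j^s$), which differs from the ambient coordinate field because the collapsing normal coordinates are being frozen. Hence the $p$ degenerate directions flatten from complex to the real isotropic generators $\partial/\partial\tilde\theta_1,\dots,\partial/\partial\tilde\theta_p$, while the remaining $n-p$ survive as genuine complex directions.

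Next I would upgrade pointwise frame convergence to convergence of the subspace. The limiting frame is manifestly linearly independent at each point of $\mathring M$, so $\mathcal P_s\subset(T_m\mathring M)_{\mathbb C}$ converges in the Lagrangian Grassmannian; since the maximal-isotropic condition is closed there (the BFMN mechanism, cf.\ the proof of Theorem 1.2 in \cite{BFMN}), $\mathcal P_\infty$ is again maximal isotropic. Involutivity on $\mathring M$ is immediate: the $\partial/\partial\tilde\theta_j$ are the commuting infinitesimal generators of the $T^p$-action and the transverse limits are coordinate fields for a complex structure, so all brackets among the generators vanish. I then identify $\mathcal P_\infty$ with the intrinsic polarization $(\mathcal D_{\mathbb C}\cap P_J)\oplus\mathcal I_{\mathbb C}$: the $\partial/\partial\tilde\theta_j$, $j\le p$, span $\mathcal I_{\mathbb C}=\mathrm{Im}\,d\rho_p\otimes\mathbb{C}$, while the transverse limits annihilate $d\tilde x_1,\dots,d\tilde x_p$ (so lie in $\mathcal D_{\mathbb C}=\ker d\mu_p\otimes\mathbb{C}$) and are holomorphic transverse to the orbits (so lie in $P_J$), yielding the second displayed presentation verbatim. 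The first presentation records the same limit through the Hamiltonian frame used in \cite{LW1,W1}: since $X_{\tilde z_j^s}=X_{\tilde z_j}-s\,\partial/\partial\tilde\theta_j$ for $j\le p$ and $X_{\tilde z_j^s}=X_{\tilde z_j}$ for $j>p$ (because $\tilde z_j^s=\tilde z_j$ there), the rescaled frame $\{s^{-1}X_{\tilde z_j^s}\}_{j\le p}\cup\{X_{\tilde z_j^s}\}_{j>p}$ converges to $\{-\partial/\partial\tilde\theta_j\}_{j\le p}\cup\{X_{\tilde z_j}\}_{j>p}$; the equivalence of the two presentations is then the standard K\"ahler relation between the Hamiltonian fields $X_{\tilde z_j}$ of the holomorphic coordinates and the coordinate fields on the transverse factor, taken in the conventions fixed earlier in the paper.

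Main obstacle: I expect the real difficulty to be geometric rather than computational. The block-matrix limit is routine; what needs care is that this is a \emph{rank-changing} family, in which $p$ complex directions flatten onto real isotropic ones, and one must ensure that it converges to an honest integrable maximal-isotropic distribution rather than merely a measurable field of subspaces. The most delicate point is the behaviour on the non-free locus, where the angle coordinates break down and $\mathcal P_\infty$ genuinely becomes singular; this is exactly why the explicit formula is asserted only over $\mathring M$ and why one must rely on the Grassmannian compactness/closedness argument to guarantee existence of the limit globally. A secondary point requiring attention is the interchange, in the first presentation, of the convergent coordinate frame with the $s$-divergent Hamiltonian frame, which is legitimate only after the rescaling by $s^{-1}$ on the collapsing directions.
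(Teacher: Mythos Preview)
Your proposal is correct and subsumes the paper's argument: the paper's proof is precisely your Hamiltonian-frame computation (from $\tilde z_j^s=\tilde z_j^0+s\tilde x_j$ for $j\le p$ and $\tilde z_j^s=\tilde z_j^0$ for $j>p$, rescale by $s^{-1}$ on the first $p$ fields and take the limit), with the equivalence of the two displayed presentations simply deferred to \cite{LW1}. Your Schur-complement analysis of $\tilde G_s^{-1}$ is not needed for the open-orbit statement but anticipates exactly the computation the paper carries out afterward in Proposition~\ref{prop_limippolontheboundary} for the boundary strata, and your remark that $\partial/\partial\tilde z_j$ for $j>p$ must be read as the transverse limit field is more careful than the paper's own notation.
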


{}

\begin{proof}
Note that $\mathcal{P}_s, s\geq 0$ is generated on the open dense orbit by the 
Hamiltonian vector fields
$$
X_{\bar{\tilde{z}}^j_s}, j=1, \dots n.
$$
But we have 
$$
\tilde z^j_s = \tilde z^j_0 + s \tilde x^j,\,  j=1, \dots p,
$$
and
$$
\tilde z^j_s = \tilde z^j_0, \, j= p+1, \dots n,
$$
from which the result for the open dense orbit follows by taking $s\to \infty$. The second expression for $\mathcal{P}_\infty$ follows directly from the formulation in \cite{LW1} and the equivalence between the two expressions can also be directly checked by writing the Hamiltonian vector fields explicitly in action-angle coordinates and then taking the limit $s\to \infty.$
\end{proof}

We will also describe the explicit expressions for $P_\infty$ along $\mu^{-1}(\partial P)$ below in Proposition \ref{prop_limippolontheboundary}.
Thus, the limit polarization $\mathcal{P}_\infty$ has $p$ real directions and $n-p$ holomorphic directions. (Note that when $p=n$ one obtains the real toric polarization on $\mathring M$.)

Recall that around each vertex $v$ of $P$ one has a coordinate neighborhood $U_v$ and  holomorphic coordinates 
$(w_v^1, \dots, w_v^n)$ which are rational functions of the holomorphic coordinates $(w^1, \dots , w^n)$ along $\mathring M \cap U_v.$ If $A_v\in GL_n(\mathbb{Z})$ is matrix whose rows correspond to the primitive inner pointing normals to the facets of $P$ which are adjacent to $v$, recall that
one has affine coordinates on $U_v$
$$
x_v = A_v x + \lambda_v, \,\,\, \theta_v = {}^tA_v^{-1}\theta,
$$
where $\lambda_v$ is the $n\times 1$ matrix containing the $\lambda_j's$ corresponding the facets adjacent to $P$ given by the condition $l_j(x)=0$. One obtains,
$$
z_v = \frac{\partial g}{\partial x_v} + i \theta_v = {}^tA_v^{-1} z,
$$
and
$$
w_v^j = e^{z_v^j} = \Pi_{k=1}^n (w^k)^{(A_v^{-1})_{kj}},
$$
which is simply written as $w = w_v^{A_v}$. In terms of the new affine coordinates $(\tilde x, \tilde \theta)$ we obtain, correspondingly,
$$
\tilde w = \tilde w_v^{\tilde A_v},
$$
where $\tilde A_v = A_v B^{-1}.$ For the symplectic potentials $g_s = g+s H, s\geq 0$, consider the Hessian, with respect to the new action coordinates $\tilde x$,
$$
\tilde G_s = \tilde G + s T,
$$
where $T$ is the $n\times n$ matrix whose top $p\times p$ diagonal block is the identity and whose remaining entries are zero and where $\tilde G$ is the Hessian for $g$. Let $D$ denote the lower $(n-p)\times (n-p)$ diagonal block in $\tilde G$.

As in Theorem 3.4 in \cite{BFMN}, in any of the charts $(U_v,\tilde w_{v})$, the limit polarization 
$\mathcal{P}_{\infty}$ can be described as follows: for any face $F$ in the coordinate neighbourhood, we write abusively $j \in F$ if $\tilde w_{v}^{j}=0$ along $F$. Over the boundary of the moment polytope one then obtains

\begin{prop}
\label{prop_limippolontheboundary}

Over $\mu^{-1}(F) \cap U_v$,

    \begin{eqnarray}\nonumber
     && (\mathcal{P}_\infty) :=
     \lim_{s\to +\infty} \left( \langle \frac{\partial}{\partial \tilde w_v^{j}}, 
     j\in F \rangle_{\mathbb{C}} \oplus \langle \sum_{k\notin F} (\tilde G_s^v)^{-1}_{jk}\frac{\partial}{\partial \tilde x_v^k} -i\frac{\partial}{\partial \tilde \theta_v^j} , \, j\notin F \rangle_{\mathbb{C}}\right) 
     \\ \nonumber
     && = \langle \frac{\partial}{\partial \tilde w_v^{j}}, 
     j\in F \rangle_{\mathbb{C}} \oplus \langle \sum_{k\notin F, q,r=1}^{n} (\tilde{A}_{v})_{jq} (\tilde G^{-1}_\infty)_{qr}(\tilde{A}^{T}_{v})_{rk} \frac{\partial}{\partial \tilde x_v^k} -i\frac{\partial}{\partial \tilde \theta_v^j} , \, j\notin F \rangle_{\mathbb{C}}
     \\ \nonumber
     && =  \langle \frac{\partial}{\partial \tilde w_v^{j}}, 
     j\in F \rangle_{\mathbb{C}} \oplus \langle
     \sum_{q,l=p+1}^{n} (\tilde A_v)_{jq} D^{-1}_{(q-p)(l-p)} \frac{\partial}{\partial \tilde x^l}-i \frac{\partial}{\partial \tilde \theta_v^j},\, j\notin F\rangle_\mathbb{C},
    \end{eqnarray}
    
     where $\tilde G_s^v$ is the Hessian of $g_s$ with respect to the coordinates $\tilde x_v.$
\end{prop}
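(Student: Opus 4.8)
The plan is to take the first displayed line as the starting point, justified exactly as in Theorem~3.4 of \cite{BFMN} adapted to the mixed Hamiltonian $H$: over $\mu^{-1}(F)\cap U_v$ the generators $\partial/\partial\tilde w_v^{j}$, $j\in F$, survive the limit as genuine holomorphic directions, while the remaining generators $X_{\bar{\tilde z}^j_s}$, $j\notin F$, organize into the moment-type combinations $\sum_{k\notin F}(\tilde G_s^v)^{-1}_{jk}\,\partial/\partial\tilde x_v^k - i\,\partial/\partial\tilde\theta_v^j$. The restriction of the inner sum to $k\notin F$ reflects that the $k\in F$ components get absorbed into the holomorphic generators $\partial/\partial\tilde w_v^{k}$. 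Granting this first line, the proposition reduces to two purely algebraic identities whose only analytic input is the behaviour of the inverse Hessian as $s\to+\infty$.

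First I would pass from the chart Hessian to the global one. Since $\tilde x_v = \tilde A_v\,\tilde x + \mathrm{const}$ with $\tilde A_v = A_v B^{-1}$, one has $\tilde G_s = \tilde A_v^{T}\,\tilde G_s^v\,\tilde A_v$, hence $(\tilde G_s^v)^{-1} = \tilde A_v\,\tilde G_s^{-1}\,\tilde A_v^{T}$, i.e. $(\tilde G_s^v)^{-1}_{jk} = \sum_{q,r}(\tilde A_v)_{jq}(\tilde G_s^{-1})_{qr}(\tilde A_v^{T})_{rk}$. Substituting this into the first line and moving the limit inside the finite $q,r$-sums produces the second line, where $\tilde G_\infty^{-1} := \lim_{s\to+\infty}\tilde G_s^{-1}$. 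The analytic heart is then computing this limit. I would block-decompose the global Hessian as $\tilde G = \left[\begin{smallmatrix} A & B \\ B^{T} & D\end{smallmatrix}\right]$ with $A$ of size $p\times p$ and $D$ the stated lower block, so that $\tilde G_s = \tilde G + sT = \left[\begin{smallmatrix} A+sI_p & B \\ B^{T} & D\end{smallmatrix}\right]$. Applying the block-inverse (Schur complement) formula and letting $s\to+\infty$: the Schur complement $D - B^{T}(A+sI_p)^{-1}B \to D$, while every factor of $(A+sI_p)^{-1}$ vanishes, so that $\tilde G_\infty^{-1} = \left[\begin{smallmatrix} 0 & 0 \\ 0 & D^{-1}\end{smallmatrix}\right]$. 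Consequently $(\tilde G_\infty^{-1})_{qr}$ is nonzero only for $p+1\le q,r\le n$, where it equals $D^{-1}_{(q-p)(r-p)}$, which collapses the $q,r$-sums in the second line to run over $p+1,\dots,n$.

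It then remains to globalize the surviving derivatives. Using $\sum_k (\tilde A_v^{T})_{rk}\,\partial/\partial\tilde x_v^k = \partial/\partial\tilde x^r$, the collapsed terms become $\sum_{q,l=p+1}^n (\tilde A_v)_{jq}\,D^{-1}_{(q-p)(l-p)}\,\partial/\partial\tilde x^l$, which is precisely the third line. The discrepancy between the full $k$-sum required for this identity and the $k\notin F$ sum appearing in the second line is immaterial at the level of the span: the $k\in F$ components can be cancelled against the holomorphic generators $\partial/\partial\tilde w_v^{k}$, $k\in F$, which carry nonzero $\partial/\partial\tilde x_v^k$ components along the face (visible in the regularizing coordinates $\{a_v^j,b_v^j\}$). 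Since the $-i\,\partial/\partial\tilde\theta_v^j$ term is common to all three expressions, tracking it is automatic.

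The main obstacle I anticipate is that two limits are coupled: the geodesic-time limit $s\to+\infty$ and the boundary limit from $\mu^{-1}(\mathring P)$ to $\mu^{-1}(F)$, because $\tilde G$ itself degenerates log-singularly along the facets meeting $F$. One must verify that the block-inverse computation is compatible with this boundary degeneration, i.e. that $D^{-1}$ and the entries of $\tilde G_\infty^{-1}$ admit well-defined finite limits along $\mu^{-1}(F)$ and that the two limits may be interchanged. This is exactly where the adapted-basis hypothesis on $B$ and the BFMN boundary analysis are essential, and the reconciliation of the $k\notin F$ restriction modulo the holomorphic generators is the accompanying combinatorial bookkeeping.
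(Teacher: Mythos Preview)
Your proposal is correct and follows essentially the same route as the paper: invoke the BFMN boundary description for the first line, then compute $\lim_{s\to\infty}\tilde G_s^{-1}$ via a block Schur-complement formula to obtain $\tilde G_\infty^{-1}=\left[\begin{smallmatrix}0&0\\0&D^{-1}\end{smallmatrix}\right]$, from which the second and third lines follow. The only cosmetic difference is that the paper takes the Schur complement of the $D$ block (working with $S=A_1+sI_p-A_2D^{-1}A_3$) whereas you take the complement of the $(A+sI_p)$ block; both yield the same limit, and your additional remarks on the chart-to-global Hessian transformation and the $k\notin F$ bookkeeping are details the paper simply absorbs into ``from which the result follows.''
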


\begin{proof}
    The result follows from Lemma 3.3 in \cite{BFMN} and from the form of the inverse of the Hessian of $g_s$ as $s\to \infty.$ Note that if 
    $$
    \tilde G = \left[ 
    \begin{array}{cc}
        A_1 & A_2 \\
        A_3 & D
    \end{array}
    \right],
    $$
where $A$ is the top diagonal $p\times p$ block in $\tilde G$, then
$$
\tilde G_s = \left[ 
    \begin{array}{cc}
        A_1 + sI_p & A_2 \\
        A_3 & D
    \end{array}
    \right].
$$

(Note that $A_2^T = A_3$ and that $A_1, D$ are symmetric.)
Define \( S = A_1 + sI_p - A_2D^{-1}A_3 \). It is clear that \( S \) becomes invertible when \( s \) is sufficiently large. Using the inverse formula, we obtain:
\[
\tilde{G}_{s}^{-1}= 
\begin{bmatrix}
S^{-1} & -S^{-1} A_{2}D^{-1} \\
-D^{-1} A_{3} S^{-1} & D^{-1} + D^{-1} A_{3} S^{-1} A_{2} D^{-1}
\end{bmatrix}.
\] 
so that 
$$
\tilde{G}_{\infty}^{-1}=\lim_{s\to \infty} \tilde G_s^{-1} =
\left[ 
    \begin{array}{cc}
        0  & 0 \\
        0 & D^{-1}
    \end{array}
    \right],
$$
from which the result follows.
\end{proof}
{}

\begin{exmp} 
Let us first consider the case of a $4$-dimensional toric manifold $M$ with moment polytope $P$. Consider the following component of the moment map, given in polytope coordinates by

\begin{align}\nonumber
\tilde x_1(x_1,x_2) = a_1x_1 + a_2x_2,
\end{align}
where $a_1$ and $a_2$ are coprime integers and $(x_1,x_2)\in P$. The complex coordinates $w_j = e^{z_j}$, with $z_j = y_j+i\theta_j$ and $y_j = \partial g/\partial x_j,\, j=1,2$, define a (Kähler) polarization $\mathcal{P}$. 

Setting $H = (\tilde x_1)^2/2$, we then proceed to calculate the evolution $\mathcal{P}_{s}, s\geq 0$, of this polarization under the imaginary time flow of the Hamiltonian vector field $X_H$ in order to examine its behavior as $s \to \infty$. First, notice that $X_{w_j} = w_jX_{z_j}$, $j = 1, 2$, meaning that $\mathcal{P}$ is spanned by $X_{z_1}$ and $X_{z_2}$. Then
\begin{align}\nonumber
    e^{isX_H}\cdot z_j = y_j + i(\theta_j - isa_j\tilde x_1) = y_j + sa_j\tilde x_1 + i\theta_j,
\end{align}
and consequently,
\begin{align}\nonumber
    e^{isX_H}\cdot X_{z_j} = X_{y_j} + sX_{a_j\tilde x_1} + iX_{\theta_j} = s\left(X_{a_j\tilde x_1}+\frac{1}{s}(X_{y_j}+iX_{\theta_j})\right).
\end{align}
{}
We now introduce the new action coordinate
\begin{align}\label{eq:change-of-coords}\nonumber
    \tilde x_2 &= b_1x_1 + b_2x_2, \\ \nonumber
\end{align}
where $b_1$ and $b_2$ are integers such that $a_1b_2 - a_2b_1 = 1$. Their existence is guaranteed by the fact that $a_1$ and $a_2$ are coprime. In other words, we have $\tilde x = A\cdot x$ with 
\begin{align}\nonumber
A = \begin{bmatrix}
a_1 & a_2 \\
b_1 & b_2
\end{bmatrix}\in SL(2,\mathbb{Z}).
\end{align}
Note that $\tilde P = A(P)$ is an equivalent Delzant polytope describing the same toric manifold $M$.  The corresponding angle coordinates are $\tilde \theta = 
{}^tA^{-1}\cdot \theta$. These new action-angle coordinates also give rise to new complex coordinates compatible with the Kähler structure on $M$, given by $\tilde z_j = \tilde y_j + i\tilde\theta_j$, where $\tilde y_j = \partial g/\partial\tilde x_j$. In these new coordinates, the evolution is much simpler:
\begin{align}\nonumber
    e^{isX_H}\cdot \tilde z_2 &= \tilde z_2, \\ \nonumber
    e^{isX_H}\cdot \tilde z_1 &= \tilde z_1 + \tilde x_1 s,
\end{align}
and therefore
\begin{align} \nonumber
    e^{isX_H}\cdot X_{\tilde z_2} &= X_{\tilde z_2} \\ \nonumber
    e^{isX_H}\cdot X_{\tilde z_1} &= X_{\tilde z_1} + X_{\tilde x_1}s = s\left(X_{\tilde x_1} + \frac{1}{s}X_{\tilde z_1}\right),
\end{align}
and so at the limit $s \to \infty$ the polarization is spanned by $X_{\tilde z_2}$ and $X_{\tilde x_1} = -\partial/\partial\theta_1$.
\end{exmp}

{}

\subsection{Some properties of the K\"ahler reductions of $\mathcal{P}_\infty$}
\label{newnewsub}

Of course, the symplectic reductions discussed above in Section \ref{newsubsec} may be singular. Let $V_c\subset {\mathbb R}^n$ denote the hyperplace defined 
by $\mu_p = (x^1,\dots, x^p) = c\in \mathbb{R}^{p}$ and let 
$M_c = \mu_p^{-1}(V_c)/T^p.$   
Let us describe, in more detail, the singularities one obtains along the symplectic reductions above.
From \cite{CDG} one has that the symplectic potentials for the reduced K\"ahler toric structures are given by the restriction of the symplectic potential on $P$ to the corresponding level sets of the moment map.
Let then the Delzant polytope $P\subset \mathbb R^n$ be defined by
$$
l_j(x) = \langle x, \nu_j\rangle +\lambda_j \geq 0, \, j=1, \dots, r,
$$
where $\nu_j$ is the primitive inward pointing normal to the facet $j$ of $P$. The associated Guillemin sympletic potential is
$$
g_P = \frac12 \sum_{j=1}^r l_j \log l_j.
$$
Let us consider the symplectic reductions corresponding to the level sets
$$
(x_1, \dots, x_k )=(c_1, \dots, c_k) =c\in \mathbb R^k, \,\, k< n,
$$
which we assume have non-empty intersection with $P$.
Let $\nu_j = (a_j, b_j), \, a_j\in \mathbb Z^k, b_j\in \mathbb Z^{n-k}$ and $y=(x_{k+1}, \dots, x_n)$.
The restriction of the Guillemin potential for $P$ becomes
$$
g_\mathrm{red} = \frac12
\sum_{j=1}^r \tilde l_j \log \tilde l_j,
$$
where 
$$
\tilde l_j (y) = \langle y, b_j\rangle + \tilde \lambda_j,
$$
where $\tilde \lambda_j = \langle c, a_j\rangle+\lambda_j$. 
The reduced polytope $P_\mathrm{red}^c$ is defined 
by the conditions
$$
\tilde l_j (y) \geq 0, j=1, \dots r.
$$
Since the $b_j$ will in general not be primitive, we obtain immediately that, in general,  the level $c$ reduction will have, at least, orbifold singularities. 
By construction, every term in $g_\mathrm{red}$ will be singular somewhere on the boundary of $P^c_\mathrm{red}$ for some value of $c$, since the level sets have non-empty intersection with $P$. That is, for each $j=1, \dots r$ there is a value of $c$ and $y\in P^c_\mathrm{red}$ such that $\tilde l_j(y)=0.$ Since $P$ is Delzant, at most $n$ linear funtions $\tilde l_j$ are allowed to vanish simultaneously at a given $y\in P^c_\mathrm{red}$ but, in general, more than $n-k$ such terms may vanish so that $P^c_\mathrm{red}$ will not, in general, be Delzant. Even when $P^c_\mathrm{red}$ is Delzant, $g_\mathrm{red}$ is not in general of the form
$$
g_{P^c_\mathrm{red}} + \mathrm{smooth}
$$
so that the geometry induced by $g_\mathrm{red}$ will in general be singular and will include singularities not of orbifold type. As a simple example, we can consider the reductions of the form 
$$
x_3=\alpha_1 x_1 + \alpha_2 x_2 + c
$$
in $\mathbb C^3$, giving the reduced symplectic potential, at $c=0$,
$$
g_\mathrm{red}=\frac12 x_1 \log x_1 + \frac12 x_2 \log x_2 + \frac12 (\alpha_1 x_1 + \alpha_2 x_2) \log (\alpha_1 x_1 + \alpha_2 x_2).
$$
which is manifestly singular at the origin $x_1=x_2=0$. In fact, for $\alpha_1=\alpha_2=\alpha$, one finds from Abreu's formula \cite{Ab2} that the scalar curvature reads
$$
S = \frac{2\alpha}{(\alpha+1)(x_1+x_2)}, 
$$
which explodes at the origin.
{}

In the next sections we will study the quantization along the mixed toric polarizations described above. In the event that the hyperplane $V_c$ intersects the moment polytope at integral points, it is natural to define the quantization $\mathcal{H}_{M_c}$ of the Kähler reduction $M_c$ as being isomorphic to the space of distributional sections supported on the integral points of each level set, as in \eqref{eq:section-limit-cst} below. If there are no integral points at the intersection, the quantization is then the trivial space $\{0\}$. 
As remarked above, some of these reductions will have the structure of a toric orbifold, while some may have worse singularities, depending on how the hyperplane $V_c$ meets the facets of $P$. It is natural to define the quantization of the level sets containing Bohr-Sommerfeld points as being given by the limit of the K\"ahler polarization, as below in Theorem \ref{convsections}, with other level sets having trivial quantization. 
We will thus obtain for the quantization in the limit mixed toric polarization,
\begin{align}\nonumber
  \mathcal{H}_{\mathcal{P}_\infty} \cong \bigoplus_{c \in \Delta\cap {\mathbb Z}^p} \mathcal{H}_{M_c},
\end{align}
so that the quantum Hilbert space $\mathcal{H}_{\mathcal{P}_\infty}$ of the quantization of $M$ in the limit polarization decomposes as the direct sum of the quantization spaces of 
the symplectic reductions. In fact, this isomorphism is unitary for a natural hermitian 
structure on $\mathcal{H}_{\mathcal{P}_\infty}$, as described in section \ref{subsecqcr} and Theorem \ref{thmqcr}.

\section{Half-form corrected mixed quantization}

In this Section, we will describe the quantization of the toric variety $M$ along the Mabuchi geodesic of toric K\"ahler structures in Theorem \ref{prop:evol-polarization} and also the quantization with respect to the limit polarization $\mathcal{P}_\infty$.

Recall that, as described in Section \ref{chapter:mixed-polarizations},  we have used an $SL_n(\mathbb{Z})$ affine transformation so that  the Hamiltonian $H$ generating the Mabuchi family takes the simple form (\ref{simplehamiltonian}). 
For simplicity of notation, we will drop the tilde from the coordinates $\tilde x, \tilde \theta, \tilde z, \tilde w$ and from the matrices $\tilde A_v$ of the previous Section, so that in this Section they will be written simply as $x,\theta, z, w$ and $A_v$.

{}
\subsection{Canonical line bundle associated to $\shP_{\infty}$}

As $\shP_{\infty}$ is a complex Lagrangian distribution on $M$, it corresponds to a complex (singular) line bundle $K_{\infty}$ defined by
\begin{align}
    K_{\infty,p}=\{\alpha \in \wedge^{n}T^{*}_{p}M_{\CC}\mid \iota_{\bar{\xi}} \alpha =0, \forall \xi \in \shP_{p}\}.
\end{align}
Note that $K_{\infty}$ is a singular complex line bundle with mild singularity, that is, $K_{\infty}$ is smooth on the open dense subset of $M$.
From Section \ref{chapter:mixed-polarizations}, we obtain that the fiber of $K_\infty$  
over $p\in \mathring M$ is
$$
(K_\infty)_p = \langle dX_1^p \wedge d Z_{p+1}^n \rangle_\mathbb{C},
$$
where
$$
dX_1^p= d x^1 \wedge \cdots  dx^p,\,\, dZ_{p+1}^n= d{{z}}_{p+1}\wedge \cdots \wedge d{{z}}_n.
$$
The section  $dX_1^p \wedge dZ_{p+1}^n$ is a trivializing section of $K_\infty$ over $\mathring M$ which has a divisor along $\partial P$. 
Note that the divisor of $dz_1 \wedge \cdots \wedge dz_n$  is $-D_1-\cdots -D_d$ while $dx^j$ goes to zero on the facet where $l_j=0.$
Moreover, from Proposition \ref{prop_limippolontheboundary}, we see that $K_\infty$ has no nontrivial smooth sections over 
$M\setminus \mathring M$. 

\subsection{Quantization and coherent state transforms}
\label{subsecronaldo}

{}
In this Section, following \cite{KMN4} as recalled in Theorem \ref{thmkmn2}, we use a generalized coherent state transform,  to describe how polarized sections evolve along the Mabuchi ray of K\"ahler polarizations $\mathcal{P}_s, s\geq 0$, considered 
in Theorem \ref{prop:evol-polarization}.

Recall the families of toric K\"ahler structures considered in Section \ref{chapter:mixed-polarizations} which were defined by the  symplectic potential:
\begin{align}
g_{s}:=g+sH, \,\, s>0,
\end{align}
where the Hamiltonian $H$ is defined in (\ref{simplehamiltonian}). (Recall that $\tilde x$ is denoted simply as $x$ in this Section.)
Denote the corresponding $s$-dependent complex structure by $I_{s}$.
 
As explained in \cite{KMN4}, and recalled in Theorem \ref{thmkmn2}, for the case when $p=n$ and $\mathcal{P}_\infty = \mathcal{P}_\mathbb{R}$, the evolution of polarized $I_s$-holomorphic sections along the Mabuchi geodesic is nicely described by a generalized coherent state transform $U_s$ in (\ref{defcst}). The same approach  applies successfully to the more general case that we are considering here with $p<n$, as we now describe. Recall that the quantization of $M$ with respect to the K\"ahler polarizations $\mathcal{P}_s, s>0$ is given by 
$$
\mathcal{H}_{\mathcal{P}_s} = \left\{\sigma^m_s, m\in P\cap \mathbb{Z}\right\},
$$
where $\sigma^m_s$ is the monomial section $\sigma^m$, as defined in Section 
\ref{section:cpx-time}, with respect to the toric complex structure $I_s$. 
 We define the quantum operator corresponding to $H$,
 $H^Q:\mathcal{H}_{{\mathcal P}_s}\to \mathcal{H}_{{\mathcal P}}$, to be given by
 $$
 H^Q \sigma^m_s = H(m) \sigma^m_s, m\in P\cap {\mathbb Z}^n,\, s\geq 0.
 $$
 Note that $\hat{H}^Q$ is  given by
\begin{align}\nonumber
    \hat{H}^Q = \frac{1}{2}\sum_{j=1}^p (\hat{x}_{j}^{prQ})^2,
\end{align}
where $x_j^{prQ}$ is the Kostant-Souriau prequantum operator associated to $x_j.$
  The generalized coherent state transform (gCST) is then defined by the  $T^n-$equivariant linear isomorphism 
\begin{align}\label{gcst}
    U_{s} = (e^{s\hat{H}^{prQ}} \otimes e^{is \shL_{X_H}}) \circ e^{-s\hat H^Q} : \mathcal{H}_{\mathcal{P}_{0}} \to \mathcal{H}_{\mathcal{P}_{s}}, s \geq 0,
\end{align}
where, as before, $H^{prQ}$ is the Kostant-Souriau prequantum operator associated to $H$. 
It is immediate to verify the validity of the following analog of Theorem \ref{thmkmn2} for the case $p<n$:
\begin{thm}\label{local-description}For $s\geq 0$,
$$
U_{s} \sigma^m_{0} = e^{-s H(m)}\sigma^m_{s}, \, m\in P\cap \mathbb{Z}.
$$
\end{thm}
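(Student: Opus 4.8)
The plan is to evaluate $U_s\sigma^m_0$ directly by unwinding the definition (\ref{gcst}) and applying its two factors in succession. First I would apply the rightmost factor $e^{-s\hat H^Q}$ to $\sigma^m_0$. By the very definition of the quantum operator $\hat H^Q$ on the toric basis, the section $\sigma^m_0$ is an eigenvector with eigenvalue $H(m)=\tfrac12\sum_{j=1}^p (m^j)^2$, so that
\[
e^{-s\hat H^Q}\sigma^m_0 = e^{-sH(m)}\,\sigma^m_0,
\]
the factor $e^{-sH(m)}$ being a complex scalar.

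Next I would apply the prequantum evolution operator $e^{s\hat H^{prQ}}\otimes e^{is\shL_{X_H}}$ to this result. By linearity the scalar $e^{-sH(m)}$ passes through, and it remains to compute the action on $\sigma^m_0$ itself. Here I would invoke the analog, for the Hamiltonian $H$, of the intertwining relation (\ref{eq:prq-evolution}): the operator $e^{s\hat H^{prQ}}\otimes e^{is\shL_{X_H}}$ is the $T^n$-equivariant isomorphism $\mathcal{H}_{\mathcal{P}_0}\to\mathcal{H}_{\mathcal{P}_s}$ sending $\sigma^m_0$ to $\sigma^m_s$. Combining the two steps gives
\[
U_s\sigma^m_0 = \bigl(e^{s\hat H^{prQ}}\otimes e^{is\shL_{X_H}}\bigr)\bigl(e^{-sH(m)}\sigma^m_0\bigr) = e^{-sH(m)}\,\sigma^m_s,
\]
which is precisely the asserted identity.

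The only step requiring genuine justification, and the one I expect to be the main obstacle, is the intertwining relation for $H$, namely that $\bigl(e^{s\hat H^{prQ}}\otimes e^{is\shL_{X_H}}\bigr)\sigma^m_0=\sigma^m_s$ when $H=\tfrac12\sum_{j=1}^p x_j^2$ is convex only along the first $p$ momentum directions rather than strictly convex on all of $P$. In the strictly convex case $p=n$ this is exactly (\ref{eq:prq-evolution}), taken from \cite{KMN1,KMN4}. To confirm it in the present setting I would observe that the computation underlying (\ref{eq:prq-evolution}) is purely local on $\mathring M$ and depends only on the transport of the monomial factor $w^m_s e^{-k_s(x)}\mathbf{1}^{U(1)}$ together with the half-form $\sqrt{dZ_s}$ along the flow of $X_H$; none of these ingredients requires positive-definiteness of the full Hessian, but only that the symplectic potential evolves as $g_s=g_P+sH$, which holds verbatim. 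Since $X_H$ generates the linear shift $z^j_s=z^j_0+s\,x^j$ for $j\le p$ and fixes $z^j_s=z^j_0$ for $j>p$, exactly as in the proof of Theorem \ref{prop:evol-polarization}, the prequantum transport of $\sigma^m_0$ reproduces the section $\sigma^m_s$ associated to $g_s$, and $T^n$-equivariance is inherited from that of each factor. Everything else reduces to the one-line eigenvalue computation above.
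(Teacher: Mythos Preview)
Your proposal is correct and matches the paper's approach: the paper itself declares the result ``immediate to verify'' as the analog of the $p=n$ case in \cite{KMN1,KMN4} and gives no further argument. Your two-step unwinding of the definition of $U_s$---first the eigenvalue action of $e^{-s\hat H^Q}$, then the intertwining relation (\ref{eq:prq-evolution}) for $e^{s\hat H^{prQ}}\otimes e^{is\shL_{X_H}}$---is exactly the verification the paper has in mind, and your remark that the underlying local computation on $\mathring M$ does not require strict convexity in all directions is the correct justification for why the $p<n$ case goes through unchanged.
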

{}
{}
Also, as in the case $p=n, \mathcal{P}_\infty = \mathcal{P}_\mathbb{R},$ the gCST gives a well-defined limit for the quantization at infinite geodesic time along the Mabuchi ray. While in that case the $I_s$-holomorphic sections converge, as $s\to\infty$, to Dirac delta distibutional sections supported on Bohr-Sommerfeld fibers, which are the fibers with integral value of the moment map, in the more general present case of $p<n$ we have convergence to distributional sections where only $p$ of the moment map coordinates localize.
Consider the following distributional sections of $L$
\begin{align}
    \tilde{\sigma}^m_\infty := \sqrt{2\pi}^p\delta^{m_{1}^p}W^m   \sqrt{dX_{1}^p \wedge dZ_{p+1}^{n}},
\end{align}
where 
  $$\delta^{m^p_{1}} = \delta(x_{1}-m_{1})\cdots\delta(x_p-m_p)$$ 
  is the Dirac delta distribution, 
  and 
  $$W^m = e^{-k}w_{p+1}^{m_{p+1}}\cdots w_{n}^{m_{n}}e^{i\sum_{j=1}^p m_j \theta_j}e^{\sum_{j=1}^p m_j\frac{\partial g}{\partial x_{j} }}.$$

Let $\underline m = (m_1, \dots, m_p).$  
Note that, since $\kappa= x\cdot y -g$, with $y=\frac{\partial g}{\partial x}$, on the support of the product of delta functions, $x_j=m_j, j=1, \dots, p$, and  
$$
-\kappa +\sum_{j=1}^p m_j\frac{\partial g}{\partial x_{j} } = -\kappa_{\mathrm{red}},
$$
where $\kappa_\mathrm{red}$ is the K\"ahler potential obtained on the 
K\"ahler quotient  $M_{\underline m}=\mu_p^{-1}(m_1,\dots, m_p)/T^p$ and which is given by the Legendre transform of the restriction of $g$ to $\mu_p^{-1}(m_1, \dots, m_p)$. Indeed, as recalled in Section \ref{newsubsec} , from \cite{CDG}, this restriction gives the symplectic potential for the reduced K\"ahler structure on the K\"ahler quotient $M_{\underline m}.$

We have therefore that 
\begin{align}
\label{limitsections}
    \tilde{\sigma}^m_\infty := \sqrt{2\pi}^p\delta^{m_{1}^p}e^{i\sum_{j=1}^p m_j \theta_j}\sigma^{\underline m, m'}   \sqrt{dX_{1}^p},
\end{align}
where $m'=(m_{p+1}, \dots, m_n)$ and 
\begin{align}
\label{reducedsections}
\sigma^{\underline m, m'} = e^{-\kappa_\mathrm{red}} w_{p+1}^{m_{p+1}}\cdots w_{n}^{m_{n}} \sqrt{dZ_{p+1}^{n}}
\end{align}
are the monomial sections for the K\"ahler quantization of $M_{\underline m}$ with respect to the reduced K\"ahler structure for the toric K\"ahler structure defined by the symplectic potential $g$, which is the initial point of the Mabuchi ray of toric K\"ahler structures $g_s, s\geq0$. (See  also Section \ref{newsubsec}.)

Note further that, from Lemma 3.3 in \cite{KMN1}, $\tilde{\sigma}^m_\infty$ gives a well-defined distributional section on $M$. Indeed, $\tilde{\sigma}^m_\infty$ has no poles along $\mu^{-1}(\partial P)$ since the zeros of $w_{p+1}^{m_{p+1}}\cdots w_n^{m_n}$ cancel the poles of $\sqrt{dZ^{n}_{p+1}}$. 
{}

Following, the proof of Theorem 4.3 in \cite{KMN4}, we obtain for the $s \to +\infty$ limit,

\begin{thm}\label{convsections}
   Let $s\geq 0.$
  \begin{align}\label{eq:section-limit-cst}
      \lim_{s\to +\infty} U_{s}\sigma^m_{0} = \tilde{\sigma}^m_\infty, \, m\in P\cap \mathbb{Z}^n.
  \end{align}
  \end{thm}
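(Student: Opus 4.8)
The plan is to reduce everything to the explicit local formula for $U_s\sigma^m_0$ supplied by Theorem \ref{local-description}, then to split the limit into a scalar (coefficient) part and a half-form part, control each as $s\to\infty$, and recombine. First I would invoke Theorem \ref{local-description} to write $U_s\sigma^m_0 = e^{-sH(m)}\sigma^m_s$ and substitute $\sigma^m_s = w^m_s e^{-k_s(x)}\mathbf{1}^{U(1)}\otimes\sqrt{dZ_s}$ together with $g_s = g_P + sH$ and $H = \frac12\sum_{j=1}^p x_j^2$. Using $y^j_s = \partial g_s/\partial x_j = y^j_0 + sx_j$ for $j\le p$ and $y^j_s = y^j_0$ for $j>p$, a direct computation gives $k_s = k_0 + \frac{s}{2}\sum_{j=1}^p x_j^2$, and the $s$-linear and $s$-quadratic contributions of $w^m_s$, $e^{-k_s}$ and $e^{-sH(m)}$ collect, via $-\frac{s}{2}m_j^2 + sm_jx_j - \frac{s}{2}x_j^2 = -\frac{s}{2}(x_j-m_j)^2$, into a single Gaussian:
\begin{align}\nonumber
U_s\sigma^m_0 = e^{-\frac{s}{2}\sum_{j=1}^p(x_j-m_j)^2}\, W^m\, \mathbf{1}^{U(1)}\otimes\sqrt{dZ_s},
\end{align}
where $W^m$ is precisely the $s$-independent factor appearing in (\ref{limitsections}).

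Next I would analyze the half-form. Since $z^j_s = z^j_0$ for $j>p$ and $z^j_s = z^j_0 + sx_j$ for $j\le p$, one has $dz^j_s = dz^j_0 + s\,dx_j$ for $j\le p$, so that
\begin{align}\nonumber
dZ_s = \Big(\bigwedge_{j=1}^p\big(dz^j_0 + s\,dx_j\big)\Big)\wedge dZ_{p+1}^n = s^p\big(dX_1^p\wedge dZ_{p+1}^n + R_s\big),
\end{align}
where the form-valued remainder $R_s$, collecting all terms containing at least one factor $dz^j_0$ with $j\le p$, is $O(1/s)$ uniformly on compact subsets of $\mathring M$. Thus the canonical section $dZ_s$, rescaled by $s^{-p}$, converges to $dX_1^p\wedge dZ_{p+1}^n \in K_\infty$, and correspondingly $s^{-p/2}\sqrt{dZ_s}\to\sqrt{dX_1^p\wedge dZ_{p+1}^n}$. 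Substituting this yields
\begin{align}\nonumber
U_s\sigma^m_0 = s^{p/2}\,e^{-\frac{s}{2}\sum_{j=1}^p(x_j-m_j)^2}\,W^m\,\mathbf{1}^{U(1)}\otimes\sqrt{dX_1^p\wedge dZ_{p+1}^n + R_s}.
\end{align}

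Finally I would pass to the limit. The normalization $\int_{\mathbb{R}}s^{1/2}e^{-\frac{s}{2}(x_j-m_j)^2}\,dx_j = \sqrt{2\pi}$ shows that $s^{p/2}e^{-\frac{s}{2}\sum_{j=1}^p(x_j-m_j)^2}\to(2\pi)^{p/2}\prod_{j=1}^p\delta(x_j-m_j) = (2\pi)^{p/2}\delta^{m_1^p}$ as distributions in $(x_1,\dots,x_p)$; combined with $R_s\to 0$ this gives $\lim_{s\to\infty}U_s\sigma^m_0 = (2\pi)^{p/2}\delta^{m_1^p}W^m\,\mathbf{1}^{U(1)}\otimes\sqrt{dX_1^p\wedge dZ_{p+1}^n} = \tilde\sigma^m_\infty$. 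I expect the main obstacle to be making this last step rigorous as convergence of \emph{distributional sections of $L$}: one must fix the pairing (against smooth test sections, in the sense of $\Gamma(M,L^{-1})'$ from Definition \ref{correctedQS}) and justify exchanging it with the $s\to\infty$ limit, the delicate point being that the scalar Gaussian concentrates while the half-form simultaneously carries its own $s^{p/2}$ blow-up, so the two $s$-dependent factors must be combined \emph{before} taking the limit rather than estimated separately. One must also check that the limit extends across $\mu^{-1}(\partial P)$ so that $\tilde\sigma^m_\infty$ is genuinely well defined on all of $M$; this is exactly the content of Lemma 3.3 of \cite{KMN1}, by which the zeros of $w_{p+1}^{m_{p+1}}\cdots w_n^{m_n}$ cancel the poles of $\sqrt{dZ_{p+1}^n}$.
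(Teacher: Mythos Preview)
Your proposal is correct and follows essentially the same route as the paper's proof: both arrive at the expression $U_s\sigma^m_0 = s^{p/2}e^{-\frac{s}{2}\sum_{j=1}^p(x_j-m_j)^2}W^m\sqrt{dX_1^p\wedge dZ_{p+1}^n + O(1/s)}$ and then let the Gaussian concentrate to $\sqrt{2\pi}^p\delta^{m_1^p}$. The only cosmetic difference is that the paper computes the actions of $e^{s\hat H^{prQ}}$, $e^{is\mathcal{L}_{X_H}}$, and $e^{-s\hat H^Q}$ separately on $w^m$ and on the half-form, whereas you shortcut this by invoking Theorem~\ref{local-description} to write $U_s\sigma^m_0 = e^{-sH(m)}\sigma^m_s$ and then expand $\sigma^m_s$ via the Legendre relations for $g_s = g_0 + sH$; the two computations are equivalent and yield the same Gaussian-times-$W^m$ structure.
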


{}

\begin{proof}
First we calculate the evolution of the half-form. The operator $e^{is \shL_{X_H}}$ acts on half-forms while keeping the consistency of its action on the algebra of smooth functions, that is,
\begin{align*}\nonumber
    &e^{is\shL_{X_H}}\sqrt{dZ} =\sqrt{ d\left(sx_{1} + z_{1}\right) \wedge \cdots \wedge d\left(sx_p +z_p \right)\wedge dZ_{p+1}^n }\\
    &= \sqrt{s}^p\sqrt{ d\left(x_{1} + \frac{1}{s}z_{1}\right) \wedge \cdots \wedge d\left(x_p + \frac{1}{s}z_p \right)\wedge dZ_{p+1}^n}.
\end{align*}
Now, using the  connection on $L$ given by
\begin{align}\nonumber
    \nabla \mathbf{1}^{U(1)} = -i\sum x_j d\theta_j\mathbf{1}^{U(1)},
\end{align}
a routine calculation gives us
\begin{align}\nonumber
    \hat{H}^{prQ} = -\frac{1}{2}\sum_{j=1}^p x_{j}^2 - i\sum_{j=1}^p x_{j}\frac{\partial}{\partial \theta^{j}}.
\end{align}
Furthermore,
\begin{align}\nonumber
    \hat{H}^Q = - \frac{1}{2}\sum_{j=1}^p \frac{\partial^2}{(\partial \theta^{j})^2}.
\end{align}
Thus,
\begin{align*}
    e^{s\hat{H}^{prQ}}\circ e^{-s{H}(m)} w^m = 
    e^{-s\alpha}e^{\beta}e^{i\Theta}w_{p+1}^{m_{p+1}}\cdots w_{n}^{m_{n}},
\end{align*}
where
\begin{align*}
    &\alpha(x_{1},\dots,x_p) = \sum_{j=1}^p \frac{1}{2}(x_j-m_j)^2, \\
    &\beta(x_{1},\dots,x_p) = \sum_{j=1}^p m_j\frac{\partial g_{0}}{\partial x_{j} }, \\
    &\Theta(\theta_{1},\dots,\theta_p) = \sum_{j=1}^p m_j\theta_j.
\end{align*}
We denote $W^{m}$ as $$W^m = e^{-k}w_{p+1}^{m_{p+1}}\cdots w_{n}^{m_{n}}e^{i\sum_{j=1}^p m_j \theta_j}e^{\sum_{j=1}^p m_j\frac{\partial g}{\partial x_{j} }}.$$ 
   We then have:
    \begin{align*}
   & \lim_{s\to +\infty} U_{s}\sigma^m_{0}=\lim_{s\to +\infty} e^{-s\alpha}W^{m}\sqrt{s}^p\sqrt{ dX_{1}^{P}\wedge dZ_{p+1}^n +O(\frac{1}{s})}\\
    &=\lim_{s\to +\infty}\left(\left(\frac{s}{2\pi}\right)^{p/2}e^{-s\sum_{j=1}^p \frac{1}{2}(x_j-m_j)^2}\right){\sqrt{2\pi}^p}W^{m}\sqrt{ dX_{1}^{P}\wedge dZ_{p+1}^n +O(\frac{1}{s})}\\
    &=\sqrt{2\pi}^p~\delta^{m_{1}^p}~W^m   \sqrt{dX_{1}^p \wedge dZ_{p+1}^{n}}= \tilde{\sigma}^m_\infty.
    \end{align*}
\end{proof}
In the next sections, we will show that the distributional sections $\tilde{\sigma}^m_\infty$ are $\mathcal{P}_\infty$-polarized and that, for $m\in P\cap \mathbb{Z}^n$, they span the vector space of $\mathcal{P}_\infty$-polarized sections, so that the gCST provides a continuous interpolation between the quantization along the Mabuchi ray from $s=0$ to $s=\infty$. 
In order to show that for the quantization wth respect to $\mathcal{P}_\infty$ no new distributional polarized sections arise with support along the boundary $\partial \Delta$, we will now recall in detail the study of half-form polarized sections 
in \cite{KMN1}.

\subsection{Quantum space $\mathcal{H}_{\mathcal{P}_\infty}$ for the half-form corrected $\mathcal{P}_\infty$}

In this Section, we prove that the Hilbert space of half-form corrected sections polarized with respect to 
$\mathcal{P}_\infty$ consists of the linear span of the basis $\left\{\tilde{\sigma}^m_\infty\right\}_{m\in P\cap \mathbb{Z}^n}$.

{}

For the family of complex structures $I_s, s\geq 0$ described above, consider the family of connections $\Theta_v$ described in (\ref{conn1}) and (\ref{conn2}) where 
$G_s = G + s T$, where $T$ was defined above Proposition \ref{prop_limippolontheboundary}. To define the condition for a half-form corrected section to be $\mathcal{P}_\infty$-polarized, we follow \cite{KMN1} and define a ``limit connection" by considering the $s\to \infty$-limit of the operators of covariant differentiation along $\mathcal{P}_\infty$. 
Note that, along $\mathring{M}$ we have
$$
X_{{z}^j_s} = -2i \sum_{k=1}^n (G_s)_{jk} \frac{\partial}{\partial \bar{z}^k_s} =
-2i \left( \frac{\partial}{\partial x^j} + i \sum_{k=1}^n (G_{s})_{jk} \frac{\partial}{\partial \theta^k}\right)
$$
and that 
$$
X_{{z}^j_s} = X_{{z}^j_0} -s \frac{\partial}{\partial \theta^j},\, j=1, \dots p,
$$
$$
X_{{z}^j_s} = X_{{z}^j_0}, \, j=p+1, \dots n.$$
The behaviour of the connection forms as $s\to \infty$ is described as follows.

\begin{lemma}\label{lemma_limitconn}
On the open orbit $\mathring M$, we obtain
\begin{align} 
\Theta_{0}^{\infty} :=-i\,{x}\cdot d{\theta}+\frac{i}{4}\sum_{j,k=p+1}^{n}\left(  \frac{\partial
}{\partial x_{j}}\log\det D\right)  \cdot  D^{-1}_{j-p,k-p}d\theta_{k},
\end{align}
 and on the vertex chart $U_v$
\begin{align*}
\Theta_{v}^{\infty} :=&-i\,{x}_{v}\cdot d{\theta}_{v}+\frac{i}{2}\sum_{k=1}^n d\theta_{v}^{k}\\
&+\frac{i}{4}\sum_{i,j=1,q,l=p+1}^{n}\left(  \frac{\partial
}{\partial x_{v}^{j}}\log\det D\right)  \cdot  (A_{v})_{jq}D^{-1}_{q-p,l-p}(A_{v}^{T})_{l,i}d\theta_{v}^{i}.
\end{align*}
\end{lemma}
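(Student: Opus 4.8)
The plan is to start from the explicit connection forms (\ref{conn2}) and (\ref{conn1}) for the family of symplectic potentials $g_s = g_P + sH$, in which $G$ is replaced by $G_s = G + sT$, and to take the $s \to \infty$ limit term by term. Since $H$ depends only on the action coordinates, the ``prequantum'' pieces $-i\,x\cdot d\theta$ (on $\mathring M$) and $-i\,x_v\cdot d\theta_v + \frac{i}{2}\sum_k d\theta_v^k$ (on $U_v$) are $s$-independent and pass to the limit unchanged. The whole analysis therefore reduces to controlling the complex-structure-dependent term $\frac{i}{4}\bigl(\tfrac{\partial}{\partial x}\log\det G_s\bigr)\cdot G_s^{-1}\,d\theta$, which I would treat as a product of two factors and handle by computing each limit separately.

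For the matrix factor, Proposition \ref{prop_limippolontheboundary} already supplies $\lim_{s\to\infty} G_s^{-1} = G_\infty^{-1}$, the matrix whose only nonzero block is the lower-right $(n-p)\times(n-p)$ block $D^{-1}$, together with the explicit Schur-complement block form of $G_s^{-1}$. For the covector factor, I would use the factorization $\det G_s = \det D \cdot \det S$ with $S = A_1 + sI_p - A_2 D^{-1} A_3$, so that $\frac{\partial}{\partial x_j}\log\det G_s = \frac{\partial}{\partial x_j}\log\det D + \mathrm{tr}\bigl(S^{-1}\,\partial_{x_j} S\bigr)$; since $S^{-1} = O(1/s)$ while $\partial_{x_j} S$ is bounded in $s$, the trace term vanishes in the limit and $\frac{\partial}{\partial x_j}\log\det G_s \to \frac{\partial}{\partial x_j}\log\det D$ for every $j$.

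Combining the two, I would argue that the product converges to the product of the limits by a blockwise estimate: the off-diagonal and upper-left blocks of $G_s^{-1}$ are $O(1/s)$ and are multiplied by the bounded quantities $\frac{\partial}{\partial x_j}\log\det G_s$, so those contributions drop out and only the lower-right block $D^{-1}$ survives, yielding $\Theta_0^\infty$ directly. For $\Theta_v^\infty$ I would pass to the vertex chart via $x_v = A_v x + \lambda_v$, $\theta_v = {}^tA_v^{-1}\theta$, under which the Hessian transforms as $G_{s,v} = {}^t(A_v^{-1}) G_s A_v^{-1}$, hence $G_{s,v}^{-1} = A_v G_s^{-1}\,{}^tA_v$ and $\det G_{s,v} = \det G_s$ (as $\det A_v = \pm 1$). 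Feeding $\lim_s G_{s,v}^{-1} = A_v G_\infty^{-1}\,{}^tA_v$ and $\frac{\partial}{\partial x_v^j}\log\det G_{s,v} = \sum_m (A_v^{-1})_{mj}\frac{\partial}{\partial x_m}\log\det G_s \to \frac{\partial}{\partial x_v^j}\log\det D$ into the transformed last term, and using $\sum_j (A_v^{-1})_{mj}(A_v)_{jq} = \delta_{mq}$ to contract indices, produces exactly the claimed expression for $\Theta_v^\infty$.

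The main obstacle is the justification of interchanging the limit with the matrix product in the complex-structure-dependent term: one must verify that although $\frac{\partial}{\partial x_j}\log\det G_s$ is finite but genuinely $x$-dependent even for indices $j \le p$, these factors only ever multiply entries of $G_s^{-1}$ that decay like $1/s$, so that no surviving contribution is lost and no spurious one is produced. The quantitative inputs that make this work — namely $S^{-1} = O(1/s)$ and the explicit block form of $G_s^{-1}$ — are precisely what Proposition \ref{prop_limippolontheboundary} provides, so the argument is self-contained once that estimate is invoked.
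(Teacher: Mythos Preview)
Your proposal is correct and follows essentially the same approach as the paper: start from the connection forms (\ref{conn1})--(\ref{conn2}) with $G$ replaced by $G_s$, observe that only the term $\frac{i}{4}\bigl(\tfrac{\partial}{\partial x}\log\det G_s\bigr)\cdot G_s^{-1}\,d\theta$ depends on $s$, and pass to the limit using the block structure of $G_s$. The paper's proof uses the asymptotic $\det G_s \sim s^p \det D + O(s^{p-1})$ and the limit $G_s^{-1}\to \mathrm{diag}(0,D^{-1})$ without further comment, whereas you make the same computation more explicit via the exact Schur factorization $\det G_s = \det D\cdot\det S$ and the estimate $\mathrm{tr}(S^{-1}\partial_{x_j}S)=O(1/s)$; this extra care justifies the interchange of limit and product that the paper leaves implicit, but the underlying argument is the same.
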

\begin{proof}
Now fix a choice of symplectic potential $g$ for the complex structure $I$
on $X$. According to equations (\ref{conn1}) and (\ref{conn2}), we have the connection $\nabla^{I}$ on $L$ defined by 
\begin{align*}
\Theta_{v}  & :=\frac{\nabla^{I}{\mathbf{1}}_{v}^{U(1)}}{{\mathbf{1}}_{v}^{U(1)}%
} =-i\,{x}_{v}\cdot d{\theta}_{v}+\frac{i}{2}\sum_{k=1}^n d\theta_{v}^{k}+\frac
{i}{4}\left(  \frac{\partial}{\partial x_{v}}\log\det  G_{v}\right)  \cdot
G_{v}^{-1}d\theta_{v}\\
&  =-i\,{x}_{v}\cdot d{\theta}_{v}+\frac{i}{2}\mathrm{Im}\left(\partial\log\det
G_{v}+\sum_{k=1}^{n}dz_{v}^{k}\right).
\end{align*}
For any symplectic potential $g_{s}$ for the complex structure $I_{s}$ on $X$, On the open orbit $\check X$, the connection forms are specified by
\begin{align*}
\Theta_{0}^{s}  &  :=-i\,{x}\cdot d{\theta}+\frac{i}{4}\left(  \frac{\partial
}{\partial x}\log\det G_{s}\right)  \cdot  G_{s}^{-1}d\theta\\
&  =-i\,{x}\cdot d{\theta+}\frac{i}{2}\mathrm{Im}\partial\log\det  G_{s},
\end{align*}
where$$ G_s = \left[ 
    \begin{array}{cc}
        A_1 + sI_p & A_2 \\
        A_3 & D
    \end{array}
    \right],
$$ 
with $A$ being the top diagonal $p\times p$ block in $\tilde G$ and $D$ being the invertible $(n-p) \times (n-p)$ matrix.
Moreover, 
$\det G_{s} \sim s^{p}\det D+O(s^{p-1})$, and $\lim_{s\rightarrow \infty}G_{s}^{-1}= \left[ 
    \begin{array}{cc}
      0 & 0 \\
        0 & D^{-1}
    \end{array}
    \right].$
We therefore obtain:
\begin{align*}
\Theta_{0}^{\infty}  &  :=-i\,{x}\cdot d{\theta}+\frac{i}{4}\sum_{j,k=p+1}^{n}\left(  \frac{\partial
}{\partial x_{j}}\log\det D\right)  \cdot  D^{-1}_{j-p,k-p}d\theta_{k}.
\end{align*}
Similarly, on the vertex chart, we have:
\begin{align*}
&\Theta_{v}^{\infty} :=-i\,{x}_{v}\cdot d{\theta}_{v}+\frac{i}{2}\sum_{k=1}^n d\theta_{v}^{k}+\frac{i}{4}\left(  \frac{\partial
}{\partial x_{v}}\log\det D\right)  \cdot  A_{v}G_{\infty}^{-1}A_{v}^{T}d\theta_{v}\\
&=-i\,{x}_{v}\cdot d{\theta}_{v}+\frac{i}{2}\sum_{k=1}^n d\theta_{v}^{k}\\
&+\frac{i}{4}\sum_{i,j=1,q,l=p+1}^{n}\left(  \frac{\partial
}{\partial x_{v}^{j}}\log\det D\right)  \cdot  (A_{v})_{jq}D^{-1}_{q-p,l-p}(A_{v}^{T})_{l,i}d\theta_{v}^{i}.
\end{align*}
\end{proof}
We therefore define $B_{\mathcal{P}_{\infty}}$, the space of $\mathcal{P}_\infty$-polarized sections to given by the intersection of the kernels of the differential operators 
$$
\nabla^\infty_{\frac{\partial}{\partial \theta^j}} = \frac{\partial}{\partial \theta^j} + \Theta_0^\infty \left(\frac{\partial}{\partial \theta^j}\right), \, j=1, \dots p,
$$
and
$$
\nabla^\infty_{X_{z^j_0}} = X_{z^j_0} +  \Theta_0^\infty \left(X_{z^j_0}\right), \, j=p+1, \dots n.
$$

\begin{dfn}\label{correctedQS}
The quantum Hilbert space for the half-form corrected mixed polarization $\mathcal{P}_{\infty}$ is defined by 
$$
\mathcal{H}_{\mathcal{P}_{\infty}} =B_{\mathcal{P}_{\infty}} \otimes \sqrt{|dX_{1}^{p}\wedge dZ_{p+1}^{n}|},
$$
where $$B_{\mathcal{P}_{\infty}}=\{\sigma \in \Gamma(M, L^{-1})' \mid \nabla^{\infty}_{\xi} \sigma=0, \forall \xi \in \Gamma(M, \mathcal{P}_{\infty})\}.$$
\end{dfn}
Note that in terms of the vertex chart coordinates
$$
\frac{\partial}{\partial \theta_k} = \sum_{j=1}^n (A_v^{-1})_{jk} \frac{\partial}{\partial \theta_v^j}
$$
and
$$
X_{z^k_0} = \sum_{j=1}^n (A_v)_{jk} X_{z^j_{v,0}}.
$$
Therefore, on the vertex chart $U_v$ the conditions of $\mathcal{P}_\infty$-polarizability 
are given by the intersections of the kernels of
$$
\sum_{k=1}^n (A_v^{-1}){kj} \left(\frac{\partial}{\partial \theta_v^k} + \Theta_v^\infty\left(\frac{\partial}{\partial \theta_v^k}\right)\right), \, j=1, \dots, p,
$$
and
$$
\sum_{k=1}^n (A_v)_{kj} \left(X_{z^k_{v,0}} +  \Theta_v^\infty\left(X_{z^k_{v,0}} \right)  \right), 
j= p+1, \dots, n.
$$
{}

\begin{lemma} \label{outsidebdy}
There are no nonzero solutions of the equations of covariant constancy along $\mathcal{P}_\infty$ with support contained along $\mu_{p}^{-1}(\partial \Delta).$
\end{lemma}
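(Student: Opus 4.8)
The plan is to adapt to the $p$ real directions of $\mathcal{P}_\infty$ the boundary analysis of half-form polarized sections carried out for the real toric polarization in \cite{KMN1}. The geometric picture is that $\partial\Delta$ is reached precisely where the subtorus $T^p$ action degenerates, so the obstruction to boundary-supported sections comes entirely from the real (i.e.\ $T^p$) directions of $\mathcal{P}_\infty$, while the $n-p$ complex directions only contribute holomorphicity transverse to these fibers. Accordingly I would first localize: a distributional section $\sigma$ with $\mathrm{supp}(\sigma)\subseteq \mu_p^{-1}(\partial\Delta)$ is supported on the union of the preimages of the facets of $\Delta$, and each such facet is the $\mu_p$-image of a facet of $P$ whose inward normal has nonzero projection onto the first $p$ coordinates. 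It therefore suffices to work in a single vertex chart $U_v$ meeting such a facet, say $\{x_v^j=0\}$ with the corresponding angle $\theta_v^j$ collapsing, and to show that the covariant constancy equations force $\sigma$ to vanish there.

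Using the limit connection form $\Theta_v^\infty$ of Lemma \ref{lemma_limitconn} and the explicit $\mathcal{P}_\infty$-polarizability conditions listed after Definition \ref{correctedQS}, I would separate the equations into those along the real directions $\partial/\partial\theta_v^k$, which govern the behavior normal to the facet, and those along the complex directions $X_{z_{v,0}^k}$, which impose holomorphicity in the remaining coordinates and, just as for the sections $\tilde\sigma^m_\infty$, produce no poles along the facet. The crux is then the real-direction equation normal to the facet, where the half-form correction is decisive: the term $\frac{i}{2}\sum_k d\theta_v^k$ appearing in $\Theta_v^\infty$ shifts the effective frequency in $\theta_v^j$ by $\tfrac12$ relative to $x_v^j$. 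Passing to the regularized coordinates $(a_v^j,b_v^j)$ that resolve the collapse of the $\theta_v^j$-circle, exactly as in \cite{KMN1}, a section supported on $\{x_v^j=0\}$ becomes a distribution supported on a single collapsed circle; expanding in Fourier modes in $\theta_v^j$, single-valuedness forces every mode to have integer frequency, whereas the covariant constancy equation fixes the frequency to the half-integer value determined by $x_v^j=0$ and the $\tfrac12$-shift, so every mode, and hence $\sigma$, must vanish.

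I expect the main obstacle to be the bookkeeping needed to disentangle the coupling term involving $\log\det D$ in $\Theta_v^\infty$ from this leading normal behavior. One must verify, using the limiting form of $\tilde G_s^{-1}$ computed in Proposition \ref{prop_limippolontheboundary}, that this term only pairs with the complex angles $\theta_v^k$ lying in the non-degenerating $D$-block and therefore contributes nothing to the coefficient of $d\theta_v^j$; this is what guarantees that the half-integer frequency computation above is unaffected and that the argument reduces cleanly to the one-dimensional normal analysis. A secondary point requiring care is that the distribution $\sigma$ need not be a plain delta but may involve transverse derivatives, so the Fourier-mode obstruction must be applied to each term in its jet expansion along the facet; since the frequency obstruction is the same at every order, this causes no difficulty.
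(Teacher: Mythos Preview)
Your proposal follows essentially the same approach as the paper: localize to a vertex chart near a facet of $\partial\Delta$, isolate the real direction of $\mathcal{P}_\infty$ normal to that facet, use the $+\tfrac{i}{2}$ half-form shift in $\Theta_v^\infty$, pass to regularizing coordinates (the paper writes $(u_j,v_j)$ where you write $(a_v^j,b_v^j)$), and invoke the one-variable distributional obstruction from \cite{BFMN} (the paper cites Theorem~4.7 there, which also handles the transverse-derivative jets you mention).

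One correction to your bookkeeping expectation. The $\log\det D$ term in $\Theta_v^\infty$ does \emph{not} pair only with the vertex angles ``lying in the non-degenerating $D$-block''; its $d\theta_v^i$-coefficients are generically nonzero for all $i$, since they involve $(A_v)_{il}$ with $l>p$. The mechanism actually used in the paper is that the real directions of $\mathcal{P}_\infty$ are the \emph{global} fields $\partial/\partial\theta^k=\sum_i (A_v^{-1})_{ik}\,\partial/\partial\theta_v^i$ for $k\le p$, and contracting the $\log\det D$ term with these gives $\sum_i (A_v^{-1})_{ik}(A_v)_{il}=\delta_{kl}$ with $l>p$, hence zero. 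So the conclusion you want (that this term does not disturb the normal equation) is correct, but it comes from evaluating on the global real generators of $\mathcal{P}_\infty$ rather than from any block-triangular structure in the vertex angles. If you try to run the argument with $\partial/\partial\theta_v^j$ directly, as your sketch suggests, you will in general pick up a nonzero contribution from this term and the clean reduction to the $(u_j,v_j)$ rotation operator plus $\tfrac{i}{2}$ will not be immediate.
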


\begin{proof}
We have the limit connection as before:
On the open orbit $\mathring M$, we obtain
\begin{align*} 
\Theta_{0}^{\infty} :=-i\,{x}\cdot d{\theta}+\frac{i}{4}\sum_{j,k=p+1}^{n}\left(  \frac{\partial
}{\partial x_{j}}\log\det D\right)  \cdot  D^{-1}_{j-p,k-p}d\theta_{k},
\end{align*}
 and on the vertex chart $U_v$
\begin{align*}
\Theta_{v}^{\infty} :=&-i\,{x}_{v}\cdot d{\theta}_{v}+\frac{i}{2}\sum_{k=1}^n d\theta_{v}^{k}\\
&+\frac{i}{4}\sum_{i,j=1,q,l=p+1}^{n}\left(  \frac{\partial
}{\partial x_{v}^{j}}\log\det D\right)  \cdot  (A_{v})_{jq}D^{-1}_{q-p,l-p}(A_{v}^{T})_{l,i}d\theta_{v}^{i}
\end{align*}

from the expression for $\Theta_v^\infty$ we see that, for $k=1, \dots, p,$ 
$$
\Theta^\infty_v \left( \frac{\partial}{\partial \theta^k}\right) =
-i \sum_{j=1}^n x_v^j (A_v^{-1})_{jk} + \frac{i}{2} \sum_{j=1}^n (A_v^{-1})_{jk}, 
$$
since the last term in the expression for $\Theta^\infty_v$ does not contribute.
Assume $\delta$ is the distributional section with support on $\mu_{-1}(\partial \Delta)$. Let us consider a solution with support along $\mu_{p}^{-1}(x_{v}^{j}=0)$, for some fixed $j=1, \cdots, p$. Here $(x_{v}^{1}, \cdots, x_{v}^{p})$ is a vertex coordinate for $\Delta$. Let $\check{u}=(u_{1},\cdots,u_{j-1},u_{j+1}, \cdots, u_{n})$ and $\check{v}=(v_{1},\cdots,v_{j-1},v_{j+1}, \cdots, v_{n})$. We take coordinates $(u_{j},v_{j},\check{u},\check{v})$ (see \cite{BFMN}), so that $x_{v}^{j}=0 \Leftrightarrow (u_{j},v_{j})=(0,0)$ and $$\nabla^{\infty}_{\frac{\partial}{\partial \theta_{v}^{j}}}=-i(-v_{j}\frac{\partial}{\partial  u_{j}}+u_{j}\frac{\partial}{\partial v_{j}}) + \frac{i}{2}$$ in that neighbourhood. The same argument as in the proof of \cite[Theorem 4.7]{BFMN}, $\nabla_{\frac{\partial}{\partial \theta_{v}^{j}}}^{\infty}\delta=0$ implies $\delta=0$. Therefore no nonzero solutions with support along $\mu_{p}^{-1}(\partial \Delta)$.
\end{proof}

\begin{thm}\label{thm_polsectionsinside}
The distributional sections $\tilde{\sigma}^m_\infty, m\in P\cap \mathbb{Z}^n$, in (\ref{limitsections}), are in $\mathcal{H}_{\mathcal{P}_\infty}$. Moreover, 
for any $\sigma \in \mathcal{H}_{\mathcal{P}_\infty}$, $\sigma$ is a linear combination of the sections $\tilde{\sigma}^m_\infty, m\in P\cap \mathbb{Z}^n$. Therefore, the distributional sections $\{\tilde{\sigma}^m_\infty, m\in P\cap \mathbb{Z}^n\}_{m\in P\cap \mathbb{Z}^n}$ form a basis of $\mathcal{H}_{\mathcal{P}_\infty}$.
\end{thm}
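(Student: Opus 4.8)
The plan is to treat the two assertions in turn: that every $\tilde\sigma^m_\infty$ is a polarized section lying in $\mathcal{H}_{\mathcal{P}_\infty}$, and that these sections span the whole space. Since $\mathcal{H}_{\mathcal{P}_\infty}=B_{\mathcal{P}_\infty}\otimes\sqrt{|dX_1^p\wedge dZ_{p+1}^n|}$ has a fixed half-form factor, both statements reduce to statements about the bundle part $\delta^{m_1^p}W^m$ in $B_{\mathcal{P}_\infty}$. For membership I would verify the covariant-constancy equations directly on $\mathring M$ using the limit connection of Lemma \ref{lemma_limitconn}, and for completeness I would solve the polarization system on the open orbit and then eliminate boundary contributions with Lemma \ref{outsidebdy}.

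For membership, first observe that the half-form $\sqrt{dX_1^p\wedge dZ_{p+1}^n}$ is Lie-constant along every generator of $\mathcal{P}_\infty$ (for instance $\mathcal{L}_{\partial/\partial\theta^j}dz_k=d(i\delta_{jk})=0$), so the equations reduce to the bundle part. In the real directions $j\le p$ one has $\Theta_0^\infty(\partial/\partial\theta^j)=-ix^j$, because the $\log\det D$ term of $\Theta_0^\infty$ contains only $d\theta_k$ with $k>p$; the $\theta^j$-derivative of $W^m$ produces $im_j$ from the factor $e^{i\sum_{l\le p}m_l\theta_l}$, while the distributional identity $x^j\delta(x^j-m_j)=m_j\delta(x^j-m_j)$ turns $-ix^j$ into $-im_j$, so the two terms cancel and $\nabla^\infty_{\partial/\partial\theta^j}\tilde\sigma^m_\infty=0$. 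In the complex directions $j>p$ the condition $\nabla^\infty_{X_{z^j_0}}\tilde\sigma^m_\infty=0$ is precisely holomorphicity of $w_{p+1}^{m_{p+1}}\cdots w_n^{m_n}$ for the residual Kähler structure governed by the block $D$; this is the computation of \cite{KMN1} carried out with the limit connection, and it can also be read off from Theorem \ref{convsections}, since the generators $X_{z^j_s}=X_{z^j_0}$ for $j>p$ are independent of $s$, so covariant constancy along them survives the $s\to\infty$ limit.

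For completeness I would expand an arbitrary polarized $\sigma$ on $\mathring M\cong\mathring P\times T^n$ as a distributional Fourier series $\sum_{\vec m}a_{\vec m}(x)e^{i\vec m\cdot\theta}$ in the angle variables. The $p$ real equations become $(x^j-m_j)a_{\vec m}=0$ for $j\le p$; each such equation forces the support of $a_{\vec m}$ onto $\{x^j=m_j\}$ and, since $(x-m)\delta'(x-m)=-\delta(x-m)\neq0$, excludes any $x^j$-derivative of the delta, so $a_{\vec m}=\delta(x_1-m_1)\cdots\delta(x_p-m_p)\,b_{\vec m}(x_{p+1},\dots,x_n)$ with $m_1,\dots,m_p\in\Z$. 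The remaining $n-p$ equations $\nabla^\infty_{X_{z^j_0}}\sigma=0$ are the Cauchy--Riemann equations for the transverse $D$-Kähler structure and, exactly as in the Kähler quantization of \cite{KMN1}, determine each $b_{\vec m}$ up to a multiplicative constant, fixing the admissible modes to the shape of $\tilde\sigma^m_\infty$. Hence on $\mathring M$ every polarized section is a (possibly infinite) combination of the $\tilde\sigma^m_\infty$.

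It then remains to pass from $\mathring M$ to $M$. Requiring the combination to extend to a global distributional section across $\mu^{-1}(\partial P)$ without poles forces, by the cancellation criterion of Lemma 3.3 in \cite{KMN1} (the zeros of the monomial absorbing the poles of $\sqrt{dZ_{p+1}^n}$), the constraint $m\in P\cap\Z^n$; in particular only finitely many modes survive. Finally, for arbitrary $\sigma\in\mathcal{H}_{\mathcal{P}_\infty}$ I would subtract the combination $\sum_m c_m\tilde\sigma^m_\infty$ agreeing with it on $\mathring M$; the difference is a polarized section supported off the open orbit, which is killed by Lemma \ref{outsidebdy} together with the same non-pole criterion, yielding $\sigma=\sum_m c_m\tilde\sigma^m_\infty$. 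I expect the main obstacle to be exactly this global step: simultaneously ruling out distributional solutions carrying transverse derivatives of the deltas and controlling support and extendability across the boundary facets; the localization Lemma \ref{outsidebdy} and the non-pole criterion are what make this control possible.
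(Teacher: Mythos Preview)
Your proposal is correct and follows essentially the same approach as the paper's proof: verify membership by direct computation with the limit connection of Lemma \ref{lemma_limitconn}, then on $\mathring M$ use the Bohr--Sommerfeld/Fourier argument to force the $(x_1,\dots,x_p)$-dependence to be Dirac deltas (with derivatives excluded) and the $(w_{p+1},\dots,w_n)$-dependence to be holomorphic monomials, and finally invoke Lemma \ref{outsidebdy} together with the pole-cancellation criterion of \cite{KMN1} to handle the boundary and cut down to $m\in P\cap\mathbb Z^n$. The paper's own proof is terser, citing \cite{KMN1} and \cite{BFMN} for the Fourier/delta step rather than writing out $(x^j-m_j)a_{\vec m}=0$ explicitly as you do, but the logic is the same.
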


\begin{proof} 
Recall that, $\tilde{\sigma}^m_\infty$ on $\mathring M$ can be write as 
\begin{align}\label{local}
\tilde{\sigma}^m_\infty := \sqrt{2\pi}^p\delta^{m_{1}^p}W^m   \sqrt{dX_{1}^p \wedge dZ_{p+1}^{n}},
\end{align}
where 
  $$\delta^{m^p_{1}} = \delta(x_{1}-m_{1})\cdots\delta(x_p-m_p)$$ 
  is the Dirac delta distribution, 
  and 
  $$W^m = e^{-k}w_{p+1}^{m_{p+1}}\cdots w_{n}^{m_{n}}e^{i\sum_{j=1}^p m_j \theta_j}e^{\sum_{j=1}^p m_j\frac{\partial g}{\partial x_{j} }}.$$
Since $L\otimes\sqrt{|K_{\infty}|} \cong l \otimes K_{\infty}$ in $\mathring M$ and for any $\xi \in \Gamma(\mathring M, \mathcal{P}_{\infty})$, $\xi$ can be written as a linear combination of $\frac{\partial}{\partial \theta^{j}}$ and $\frac{\partial}{\partial \bar{z}_{k}}$, for $j=1,\cdots p$ and $k=p+1, \cdots, n$, we find that $\tilde{\sigma}^m_\infty$ belongs to $\mathcal{H}_{\mathcal{P}_\infty}$, by applying Equation (\ref{local}) and Lemma \ref{lemma_limitconn} $\tilde{\sigma}^m_\infty$.
 By Lemma \ref{outsidebdy}, we establish that there are no solutions supported on $\mu_{p}^{-1}(\partial \Delta)$. It remains to show that any $\sigma \in \mathcal{H}_{\mathcal{P}_\infty}$, $\sigma$ can be expressed as a linear combination of the sections $\tilde{\sigma}^m_\infty, m\in P\cap \mathbb{Z}^n$. The polarization $\mathcal{P}_\infty$ is mixed and over $\mathring M$ its real fibers are given by tori of dimension $p$ generated by the vector fields $\partial/\partial {\theta_j}, j=1, \dots, p.$ Therefore, distributional sections which are $\mathcal{P}_\infty$-polarized will be supported on the partial Bohr-Sommerfeld locus given by the conditions $(x_1, \dots, x_p)=(m_1, \dots ,m_p)\in \mathbb{Z}^p \cap \Delta$ where $\Delta$ is the projection of $P$ on the first $p$ momentum coordinates. The argument analogous to those in the proof of Theorem 4.7 in \cite{KMN1} or Proposition 3.1 in \cite{BFMN} then shows that the dependence of the polarized sections on the variables $(x_1, \dots, x_p)$ is given in terms of Dirac delta distributions and that, on the other hand, derivatives of Diract delta distributions derivatives are not covariantly constant. On the other hand, by Theorem \ref{prop:evol-polarization}, the remaining directions in $\mathcal{P}_\infty$ over $\mathring M$ are just given by 
the holomorphic vector fields $\frac{\partial}{\partial \bar{z}^j_0}, j=p+1, \dots, n$, so the dependence on the variables $(w_{p+1}, \dots , w_{n})$ is given by products of monomials times an exponential of minus the restriction of the K\"ahler potential to the partial Bohr-Sommerfeld  fiber, in the form of the term $W$ in (\ref{limitsections}).
Note also that the section $\sqrt{dX_{1}^p \wedge dZ_{p+1}^{n}}$ of the half-form bundle 
$K_{\mathcal{P}_\infty}$ is $\mathcal{P}_\infty$-polarized since the functions $x_1, \dots, x_p, z_{p+1}, \dots, z_{n}$ on $\mathring M$ are $\mathcal{P}_\infty$-polarized. Therefore, for any $\sigma \in \mathcal{H}_{\mathcal{P}_\infty}$, $\sigma$ is a linear combination of the sections $\tilde{\sigma}^m_\infty, m\in P\cap \mathbb{Z}^n$.
\end{proof}

\begin{dfn}For any $\sigma_{0} \in \mathcal{H}_{\mathcal{P}_{0}}$, we define
$$U_{\infty}(\sigma_{0})=\lim_{s \to \infty} U_{s}(\sigma_{0}).$$ 
\end{dfn}
According to Theorem \ref{convsections} and Theorem \ref{thm_polsectionsinside}, we obtain the following result.
\begin{cor}\label{corQSconnection} 
Then, $U_{\infty}: \mathcal{H}_{\mathcal{P}_{0}} \rightarrow \mathcal{H}_{\mathcal{P}_\infty}$ is a $T$-invariant isomorphism. In particular,
$$U_{\infty}(\sigma_{0}^{m})=\lim_{s \to \infty} \tilde{\sigma}_{s}^{m}= \tilde{\sigma}^{m}_{\infty},$$
where $\{\sigma_{0}^{m}\}_{m\in P \cap \mathbb{Z}^{n}}$ is the canonical basis of $\mathcal{H}_{\mathcal{P}_{0}}$, and $\tilde{\sigma}_{s}^{m}=U_{s}(\sigma_{0}^{m})$.
\end{cor}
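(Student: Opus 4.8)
The plan is to read this corollary off directly from Theorem \ref{convsections} and Theorem \ref{thm_polsectionsinside}, whose combination already carries all the substance; what remains is essentially bookkeeping. First I would unwind the definition of $U_\infty$ on the canonical basis: by definition $U_\infty(\sigma_0^m) = \lim_{s\to\infty} U_s(\sigma_0^m) = \lim_{s\to\infty}\tilde\sigma_s^m$, and Theorem \ref{convsections} identifies this limit as $\tilde\sigma^m_\infty$ for every $m\in P\cap\mathbb{Z}^n$. This is exactly the ``in particular'' formula in the statement, and by Theorem \ref{thm_polsectionsinside} these images lie in $\mathcal{H}_{\mathcal{P}_\infty}$, so $U_\infty$ is genuinely a map into the asserted target.

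Next I would promote this to a linear isomorphism. Both spaces are finite dimensional with bases indexed by the same set $P\cap\mathbb{Z}^n$: the domain carries the canonical basis $\{\sigma_0^m\}$ by hypothesis, while Theorem \ref{thm_polsectionsinside} establishes that $\{\tilde\sigma^m_\infty\}$ is a basis of $\mathcal{H}_{\mathcal{P}_\infty}$. Taking $U_\infty$ to be the linear extension of $\sigma_0^m \mapsto \tilde\sigma^m_\infty$, it is the unique linear map carrying one basis bijectively onto the other; since the two index sets coincide it is a bijection, hence an isomorphism, and in particular $\dim\mathcal{H}_{\mathcal{P}_0} = \dim\mathcal{H}_{\mathcal{P}_\infty} = |P\cap\mathbb{Z}^n|$.

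It then remains to check $T^n$-equivariance. Each $U_s$ is $T^n$-equivariant by construction, since every factor in \eqref{gcst}, namely $e^{s\hat H^{prQ}}$, $e^{is\shL_{X_H}}$ and $e^{-s\hat H^Q}$, commutes with the torus action because $H$ in \eqref{simplehamiltonian} is a function of the moment-map coordinates alone and hence $T^n$-invariant. The basis sections are weight vectors: both $\sigma_0^m$ and $\tilde\sigma^m_\infty$ transform under $T^n$ with weight $m$, as the torus acts only on the angle variables and, from \eqref{limitsections}, $\tilde\sigma^m_\infty$ carries the factor $e^{i\sum_{j=1}^p m_j\theta_j}$ along the collapsed directions together with the monomial dependence $w_{p+1}^{m_{p+1}}\cdots w_n^{m_n}$ along the surviving holomorphic directions. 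Since $U_\infty$ sends the weight-$m$ vector $\sigma_0^m$ to the weight-$m$ vector $\tilde\sigma^m_\infty$ for every $m$, it intertwines the two representations and is $T^n$-equivariant.

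I expect no genuine obstacle here, as the analytic heavy lifting sits in the two cited theorems; the only point I would take a little care over is confirming directly from the explicit form \eqref{limitsections} that $\tilde\sigma^m_\infty$ indeed has $T^n$-weight $m$, so that the passage to the limit preserves the weight decomposition and the equivariance of the $U_s$ survives in $U_\infty$.
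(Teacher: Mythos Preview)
Your proposal is correct and matches the paper's approach exactly: the paper offers no explicit proof of this corollary, simply prefacing it with ``According to Theorem \ref{convsections} and Theorem \ref{thm_polsectionsinside}, we obtain the following result.'' Your write-up is just a careful unpacking of why those two theorems suffice, together with the easy equivariance check, and contains no gaps.
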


\subsection{Quantization commutes with reduction: asymptotic unitarity}
\label{subsecqcr}

 Let us consider the half-form corrected holomorphic section with respect to the complex structure $I_s$, $\sigma^m_s$ 
(Note that due to the integration along $T^n$, $\sigma^m_s$ is orthogonal to $\sigma^{m'}_s$ for $m\neq m'$.) 
From equation (4.7) in \cite{KMN1}, the norm squared of the half-form corrected section $\sigma^m_s$ is
$$
\vert\vert \sigma^m_s\vert\vert^2_{L^2} = 
 \int_P  e^{-2s(\sum_{j=1}^p(x^j-m^j)^2-H)} e^{-2(\sum_{j=1}^n (x^j-m^j)y^j-g_P)}\cdot
 $$
 $$
 \cdot (\det G_s)^\frac12 dx^1\cdots dx^p\cdot  dx^{p+1}\cdots dx^n,
 $$
 where $y= \partial g / \partial x.$
Let $U_{s}:\mathcal{H}_{\mathcal{P}_{0}} \to \mathcal{H}_{\mathcal{P}_{s}}$ be the generalized coherent state transform (gCST) is defined as before (see equation (\ref{gcst})). We denote $U_{s}(\sigma^{m}_{0})$ by $\tilde{\sigma}^{m}_{s}$. Then we have
 \begin{thm}\label{lemma-norms}
Let $\{\tilde{\sigma}^{m}_{s}:= U_{s}(\sigma_{0}^{m})\}_{m \in P \cap \mathbb{Z}^{n}}$ be the basis of $\mathcal{H}_{\mathcal{P}_{s}}$. Then we have:
$$\lim_{s \to \infty}\vert\vert \tilde{\sigma}^m_s\vert\vert_{L^2}^2 =  c_m \pi^{p/2},$$
where the constant $c_m$ is given by
\begin{align}\label{coefficient}
c_m =  \int_{P}  \left(\Pi_{j=1}^p\delta(x^j-m^j)\right) e^{-2((x-m)\cdot y -g)} (\det D)^\frac12dx^1\cdots dx^n.\hspace{0.5cm}
\end{align}
\end{thm}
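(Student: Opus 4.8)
The plan is to take the explicit formula for $\lVert\sigma^m_s\rVert^2_{L^2}$ quoted from \cite{KMN1}, apply the gCST correction factor coming from Theorem \ref{local-description}, and then evaluate the $s\to\infty$ limit of the resulting integral by a Laplace/Gaussian-concentration argument in the first $p$ action variables. First I would recall that $\tilde\sigma^m_s = U_s\sigma^m_0 = e^{-sH(m)}\sigma^m_s$ by Theorem \ref{local-description}, so that
\begin{align*}
\lVert\tilde\sigma^m_s\rVert^2_{L^2} = e^{-2sH(m)}\,\lVert\sigma^m_s\rVert^2_{L^2},
\end{align*}
and then substitute $H = \tfrac12\sum_{j=1}^p x_j^2$ and $H(m) = \tfrac12\sum_{j=1}^p m_j^2$ into the quoted expression. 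The key observation is that the combination $e^{-2sH(m)}e^{-2s(\sum_{j=1}^p(x^j-m^j)^2 - H)}$ collapses: since $\sum_{j=1}^p(x^j-m^j)^2 - H + H(m) = \sum_{j=1}^p(x^j-m^j)^2 - \tfrac12\sum_{j=1}^p x_j^2 + \tfrac12\sum_{j=1}^p m_j^2$, I expect the cross terms to rearrange into exactly $\tfrac12\sum_{j=1}^p(x^j-m^j)^2$, so that the $s$-dependent Gaussian weight in the integrand becomes $e^{-s\sum_{j=1}^p(x^j-m^j)^2}$ (up to verifying the precise numerical factor against the factor $\pi^{p/2}$ appearing in the statement).

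Next I would treat the factor $(\det G_s)^{1/2}$ using the asymptotics already established in the proof of Lemma \ref{lemma_limitconn}, namely $\det G_s \sim s^p\det D + O(s^{p-1})$ as $s\to\infty$, so that $(\det G_s)^{1/2} \sim s^{p/2}(\det D)^{1/2}$. Combining this $s^{p/2}$ with the Gaussian weight, the integral over the first $p$ variables takes the form $\int s^{p/2}e^{-s\sum_{j=1}^p(x^j-m^j)^2}(\,\cdots\,)\,dx^1\cdots dx^p$, and the standard one-dimensional identity $\int_{\mathbb{R}} s^{1/2}e^{-s(x-m)^2}\,dx = \sqrt{\pi}$ shows that the measure $s^{p/2}e^{-s\sum(x^j-m^j)^2}\,dx^1\cdots dx^p$ converges weakly, up to the constant $\pi^{p/2}$, to $\pi^{p/2}\prod_{j=1}^p\delta(x^j-m^j)$. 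Passing to the limit then localizes the remaining smooth integrand $e^{-2((x-m)\cdot y - g_P)}(\det D)^{1/2}$ at $(x^1,\dots,x^p)=(m^1,\dots,m^p)$, leaving precisely $\pi^{p/2}c_m$ with $c_m$ as in (\ref{coefficient}).

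The main obstacle will be justifying the interchange of limit and integration and the weak convergence to the delta distribution uniformly over the polytope, since the remaining $(n-p)$-dimensional integration in $x^{p+1},\dots,x^n$ is over a region whose shape depends on $(x^1,\dots,x^p)$ and since the integrand degenerates near $\partial P$. To handle this rigorously I would argue that the Gaussian factor concentrates in a shrinking neighborhood of $\{x^j=m^j\}_{j=1}^p$ where, for $m$ an interior integral point, the factors $e^{-2((x-m)\cdot y - g_P)}$ and $(\det D)^{1/2}$ are smooth and bounded, so dominated convergence applies after the change of variables $u^j = \sqrt{s}(x^j-m^j)$; the boundary contributions are exponentially suppressed by the Gaussian and hence vanish in the limit. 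A minor point to verify is that $y = \partial g_P/\partial x$ rather than $\partial g_s/\partial x$ enters the surviving exponent, which follows because the $s$-dependent part of the symplectic potential is exactly $sH$ and its contribution has already been absorbed into the Gaussian weight.
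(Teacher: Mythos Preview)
Your proposal is correct and follows essentially the same route as the paper: both arguments combine the relation $\tilde\sigma^m_s=e^{-sH(m)}\sigma^m_s$ from Theorem~\ref{local-description}, the asymptotic $\det G_s\sim s^p\det D$, and Gaussian concentration in $(x^1,\dots,x^p)$ (the paper phrases this last step as the Laplace approximation of \cite[Lemma~4.1]{KMN1}, which is the same mechanism as your weak convergence of $s^{p/2}e^{-s\sum(x^j-m^j)^2}dx$ to $\pi^{p/2}\prod\delta(x^j-m^j)$). Your expected simplification to the exponent $-s\sum_{j=1}^p(x^j-m^j)^2$ is indeed what one obtains once the $s$-dependent part of $(x-m)\cdot y_s-g_s$ is expanded correctly, and the half-integrality of the $\lambda_j$ in Section~\ref{corrq} guarantees that every $m\in P\cap\mathbb{Z}^n$ lies in $\mathring P$, so your interior-point dominated-convergence justification covers all cases.
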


\begin{proof}
   By the proof of Lemma 4.12 in \cite{KMN1}, we have, as $s\to \infty$,
$$
\det G_s \sim s^{p} \det D,
$$
where $G_{s}=\mathrm{Hess}g_{s}= \left[ 
    \begin{array}{cc}
        A_1 + sI_p & A_2 \\
        A_3 & D
    \end{array}
    \right]$.
   By equation (4.7) in \cite{KMN1}, the norm squared of the half-form corrected section $\sigma^m_s$ is
$$
\vert\vert \sigma^m_s\vert\vert^2_{L^2} = 
 \int_P  e^{-2s(\sum_{j=1}^p(x^j-m^j)^2-H)} e^{-2(\sum_{j=1}^n (x^j-m^j)y^j-g)} (\det G_s)^\frac12 dx^1\cdots dx^n,
 $$
 where $y= \partial g/ \partial x.$
As $s\to \infty$, the Laplace approximation (see \cite[Lemma4.1]{KMN1}) for the integration on the 
variables $x^1, \dots, x^p,$
gives, asymptotically
$$
\vert\vert \sigma^m_s\vert\vert_{L^2}^2 \sim \left(\frac{2\pi}{s}\right)^{p/2} 
\frac{e^{2sH(m)} s^{p/2} }{2^{p/2}} c_m = \pi^{p/2} c_m e^{2sH(m)}.
{}
$$
By Theorem \ref{local-description}, we have
$$\vert\vert \tilde{\sigma}^m_s\vert\vert^2_{L^2}=\vert\vert \sigma^m_s\vert\vert^2_{L^2}e^{-2sH(m)}.$$
Therefore, we conclude:
$$\lim_{s \to \infty}\vert\vert \tilde{\sigma}^m_s\vert\vert_{L^2}^2 =  c_m \pi^{p/2}.$$
\end{proof}

In view of Theorem \ref{local-description}, Theorem \ref{convsections} and Theorem \ref{lemma-norms}, it is then natural to consider the following
\begin{dfn}\label{def-hermitian}
We define a hermitian structure $\vert\vert\cdot\vert\vert_{(\infty)}$ on quantum space 
$\mathcal{H}_{\mathcal{P}_\infty}=\mathrm{span}\{ \tilde{\sigma}_{\infty}^{m} \}_{m\in P \cap \mathbb{Z}^{n}}$
such that the basis $\{ \tilde{\sigma}_{\infty}^{m} \}_{m\in P \cap \mathbb{Z}^{n}}$ is orthogonal and
$$\vert\vert \tilde{\sigma}_{\infty}^{m} \vert\vert_{(\infty)}^{2}=c_m \pi^{p/2},$$
where the constant $c_m$ is given by (\ref{coefficient}).
\end{dfn}

Recall that, as mentioned in section \ref{newsubsec}, from \cite{CDG}, the restriction of the symplectic potential to a level set of the moment map defines a toric symplectic potential in the corresponding  symplectic reduction. Thus, for a given K\"ahler reduction 
$M_{\underline m}= \mu_p^{-1}(m_1,\dots, m_p)/T^p$, where $\underline m =(m_1, \dots , m_p)$, we will have a natural toric K\"ahler structure defined by taking as symplectic potential the restriction of $g$ to the level set $\mu^{-1}(m_1, \dots, m_p)$. For such a symplectic reduction $M_{\underline m}$, equipped with this K\"ahler structure, consider the corresponding half-form corrected K\"ahler quantization giving the collection of monomial sections 
$\left\{\sigma^{(\underline m, m')}\right\}_{(\underline m,m')\in P\cap \mathbb{Z}^n}$ and the corresponding Hilbert space 
$\mathcal{H}_{M_{\underline m}}$,
given by the standard half-form corrected inner product, as described for symplectic toric manifolds in section \ref{section-prelim}. Note that even in the case when $M_{\underline m}$ is singular, we are considering only the monomial sections corresponding to integral points in that level set and the (singular) toric reduced K\"ahler structure which is smooth on the open dense orbit in $M_{\underline m}$.

As a consequence of Theorem \ref{lemma-norms} and the natural Definition \ref{def-hermitian}, we then obtain that the quantization commutes with reduction correspondence between quantization in the limit mixed polarization and the K\"ahler reductions given by integrable values of $\mu_p$ is, in fact, unitary. As described in Section \ref{newsubsec}, this is to be naturally expected given the properties of $\mathcal{P}_\infty$.

{}

\begin{thm}\label{thmqcr}
    The natural $T^n$-equivariant  linear isomorphism
\begin{eqnarray}\nonumber
\mathcal{H}_{\mathcal{P}_\infty} &\to& \bigoplus_{\underline m\in \Delta\cap\mathbb{Z}^n}\mathcal{H}_{M_{\underline m}}\\ \nonumber
\tilde \sigma^m_\infty & \mapsto & \sigma^{\underline m, m'},
\end{eqnarray}
for $(m_1, \dots, m_p)=\underline m$ and $m=(\underline m, m')\in P\cap \mathbb{Z}^n$
    is unitary up to the overall constant $\pi^{p/2}$.
\end{thm}

\begin{proof}
Let $m=(\underline m, m')\in P\cap \mathbb{Z}^n$.
The norm of $\tilde \sigma^m_\infty$ is given in Definition \ref{def-hermitian}. In the expression (\ref{coefficient}) for $c_m$ the integral over $dx^1\cdots dx^p$ localizes the integrand on 
$x^j=m^j, j=1, \dots, p$. In the exponent, then, only partial derivatives of $g_P$ with respect to 
$(x^{p+1}, \dots, x^{n})$ survive, corresponding to the term $(x-m)\cdot y$. Likewise, the matrix $D$ gives the matrix of second derivatives of $g_P$ with respect to $(x^{p+1}, \dots, x^n)$. Thus,
after integrating along $dx^1\cdots dx^p$ one obtains precisely the expression for the half-form corrected $L^2$ norm of $\sigma^{(\underline m, m')}$ on the reduction $M_{\underline m}$ for the symplectic potential given by the restriction of $g$ to $\mu_p^{-1}(\underline m)$, as in (\ref{reducedsections}).
(Compare with the general expression for the $L^2$ norm of the monomial sections just before Theorem \ref{lemma-norms}).
\end{proof}

Along the Mabuchi geodesic ray defined by the symplectic potentials 
$g_s = g +sH, s>0$, for finite $s$, quantization commutes with reduction is not unitary, a fact that can be easily checked. However, in the infinite geodesic limit $s\to \infty$ one obtains a natural hermitian structure on the Hilbert space of distributional polarized sections with respect to 
$\mathcal{P}_\infty$ which gives, as just described, a natural $T^n$-equivariant unitary isomophism
$$
\mathcal{H}_{\mathcal{P}_\infty} \cong  \bigoplus_{\underline m \in \Delta\cap \mathbb{Z}^p} \mathcal{H}_{M_{\underline m}}.
$$

Note, moreover, that this unitary correspondence holds for all quantizable levels of $\mu_p$ simultaneously, without the need of adjusting overall constants at each level in order to achieve unitarity.
We thus obtain, in the spirit of Section \ref{newsubsec},

\begin{cor}\label{cor_qrunitary}
The quantization commutes with reduction correspondence for the toric mixed polarization $\mathcal{P}_\infty$ is unitary (up to an overall constant). \end{cor}

The property that the quantization commutes with reduction correspondence, relative to the $T^p$ action, is not unitary for the initial K\"ahler polarization $\mathcal{P}_0$ can then be more naturally reformulated by expressing the fact  that the quantizations with respect to $\mathcal{P}_0$ and to $\mathcal{P}_\infty$ are not unitarily equivalent.

\begin{rmk}
    Note that in the case when $M_{\underline m}$ is singular, as described in section \ref{newnewsub}, we are considering that the K\"ahler quantization is obtained by the monomial sections corresponding to the integral points of $P$ in that $\mu_p$ level set.
\end{rmk}

\begin{rmk}
    Note that in this paper, for simplicity, we have described the family of toric K\"ahler structures along the Mabuchi geodesic generated by the Hamiltonian $H$, which is quadratic in $(x^1, \dots, x^p).$ In fact, the results generalize straightforwardly to Mabuchi geodesics generated by Hamiltonians which are strictly convex in $(x^1,\dots, x^p)$ but not necessarily quadratic like $H$. (See eg \cite{BFMN} and the proof of Lemma 4.12 in \cite{KMN1}.)
For Mabuchi geodesics starting at the same symplectic potential, both the limit polarization 
$\mathcal{P}_\infty$ and the hermitian structure defined above is then natural in the sense that $\mathcal{P}_\infty$ is also obtained in the limit of infinite geodesic time for this larger family of geodesics and that the result of in Theorem \ref{lemma-norms}  also holds so, that the hermitian structure naturally induced on  $\mathcal{H}_{\mathcal{P}_\infty}$ is the same as above. Note that the corresponding gCST (see \cite{KMN4}) will then satisfy
$$
\lim_{s\to \infty} \vert\vert U_s \left(\sigma^m_0\right)\vert\vert_{L^{2}}^{2} =  \vert\vert\tilde{\sigma}^m_\infty\vert\vert_{(\infty)}.
$$
\end{rmk}

\section{Acknowledgements} 
The authors would like to thank Thomas Baier for many discussions on the topics of the paper. The authors were supported by the the projects CAMGSD UIDB/04459/2020 and CAMGSD  UIDP/04459/2020. A.P. held an FCT LisMath PhD fellowship PD/BD/135528/2018.   


\end{document}